\def\XXint#1#2#3{{\setbox0=\hbox{$#1{#2#3}{\int}$ }
		\vcenter{\hbox{$#2#3$ }}\kern-.6\wd0}}
\newcommand{\transint}{\cap\kern-0.63em|\kern0.7em}
\DeclareMathSymbol{\intprod}{\mathbin}{MnSyC}{'270}
\newcommand{\p}{{ \partial}}
\renewcommand{\1}{{\mathbf 1}}
\newcommand{\R}{{\mathbb R}}
\renewcommand{\p}{{\partial}}
\newtheorem{thm}{Theorem}[section]
\newtheorem{lemma}[thm]{Lemma}
\newtheorem*{lemma*}{Lemma}
\newtheorem{prop}[thm]{Proposition}
\newtheorem{cor}[thm]{Corollary}
\newtheorem*{conj*}{Conjecture}
   \newtheoremstyle{others}
     {3pt}
     {2pt}
     {}
     {}
     {\bf}
     {.}
     {.5em}
     {}
\theoremstyle{others}
\newtheorem*{rmk*}{Remark}
\newtheorem{defn}[thm]{Definition}
\newtheorem*{question*}{Question}
\numberwithin{equation}{section}
\newtheorem{definition}[thm]{Definition}
\begin{document}

\title[Semilinear heat inequality with 
Morrey initial data]{The semilinear heat inequality with Morrey initial data on 
Riemannian manifolds}
\author{Anuk Dayaprema}
\begin{abstract}
    The goal of this paper is to obtain estimates for nonnegative solutions of the differential inequality
\begin{equation}
        \left(\frac{\partial}{\partial t} - \Delta\right) u \leq A u^p + B u \nonumber
    \end{equation}
    with small initial data in borderline Morrey norms over a 
    Riemannian manifold with bounded geometry. 
    We obtain \(L^\infty\) estimates 
    assuming $$\|u(\cdot,0)\|_{M^{q, \frac{2q}{p-1}}} + \sup_{0 \leq t < T} \|u(\cdot, t) \|_{L^s} < \delta,$$
    where \(1 < q \leq q_c := \frac{n(p-1)}{2}\) and \(1 \leq s \leq q_c\). Assuming also a bound on $\|u(\cdot, 0)\|_{M^{q', \lambda'}}$, where $\frac{\lambda'}{2q'} < \frac{1}{p-1},$ 
    we get an improved estimate near the initial time. These results have applications to geometric flows in higher dimensions.
\end{abstract}
\maketitle

\vspace{-.5cm}
\section{Introduction} 
\subsection{Background}\label{subsection:background}

\thispagestyle{empty}

Semilinear heat equations have been studied extensively on domains in Euclidean space; see the monograph by Quittner and Souplet \cite[Chapter  
 II]{superlinearbook}.
 
 In 1980, Weissler \cite{weisslersemilinearLpwellposed} showed that for \(p > 1\) and a bounded domain \(\Omega \subset \R^n\), the Cauchy-Dirichlet problem
 \begin{align}\label{diffeq}
     \left( \frac{\p}{\partial t} - \Delta \right) u = |u|^{p-1}u, \quad u(\cdot, 0) = u_0, \quad u\big|_{\partial \Omega} = 0,
\end{align}
is well-posed for initial data \(u_0 \in L^q\), with \(q\) strictly bigger than the critical value
 \begin{align*}
     q_c := \frac{n(p-1)}{2}.
 \end{align*} For initial data in the scale-invariant/critical space \(L^{q_c}(\Omega)\), with \(q_c > 1\), Weissler also showed that one can classically solve (\ref{diffeq}), although the interval of definition is no longer uniformly bounded away from zero on bounded subsets of \(L^{q_c}(\Omega)\). In fact, it follows from work of Kaplan \cite{kaplan} and a rescaling argument \cite[Remark 15.4 (i) and Theorem 17.1]{superlinearbook} that, for bounded domains, the \(L^{q_c}\) norm does not control the maximal time of existence. \par
 However, in 1999, Souplet \cite{soupletstability} showed that if \(p > 1 + \frac{2}{n}\) 
 and the \(L^{q_c}\)-norm of the initial data is sufficiently small, then the solution exists for all time and converges to the zero function at a polynomial rate. \par
 
 The problem (\ref{diffeq}) with small, borderline initial data has been further studied in the setting of Morrey spaces. For given \(n\) and \(p\), there is a one-parameter family of scale-invariant/critical Morrey spaces \(M^{q, \frac{2q}{p-1}}(\Omega)\), with \(1 \leq q \leq q_c\), all of which include \(L^{q_c}(\Omega)\).  
  In 2015, Blatt and Struwe \cite{blattstruweanalyticframework, blattstruweboundary, laneemdenMorrey} showed that on bounded 
  domains in \(\R^n\), with \(n \geq 3\) and \(p > \frac{n+2}{n-2}\), (\ref{diffeq}) has a global-in-time classical solution if \(u_0\) is small in the \(M^{2, \frac{4}{p-1}}\) norm. Furthermore, it follows from their work that the solution is bounded for all positive times and decays polynomially at the same rate as in the \(L^{q_c}\) case. Additionally, Blatt and Struwe derived a relaxed local well-posedness result, similar to Weissler's for \(L^{q_c}(\Omega)\), in parabolic Morrey spaces for initial data in a proper subspace of \(M^{2, \frac{4}{p-1}}(\Omega)\). \par
 In 2016, Souplet \cite{semilinearmorrey} generalized the work of Blatt and Struwe to a wider range of Morrey exponents. In particular, Souplet showed using semigroup methods that initial data which are small in the \(M^{q, \frac{2q}{p-1}}\) norm, with \(1 < q \leq q_c\), yield a global classical solution to (\ref{diffeq}). Moreover, the \(L^\infty\) norm of the solution decays polynomially at the same rate as in the \(L^{q_c}\) case. In addition, Souplet's approach yields that for \(q\) in the aforementioned range, the \(M^{q, \frac{2q}{p-1}}\) norm of the solution remains small up to a multiplicative constant. \par
Many geometric flows
involve one or more nonnegative energy density functions subject to differential inequalities of the form
\begin{align}\label{diffineq}
    \left( \frac{\p}{\p t} - \Delta \right) u \leq Au^p + Bu,
\end{align} with nonnegative constants \(A\) and \(B\). These flows often exist smoothly as long as the energy density remains bounded; hence, 
it is desirable to have long-time $L^\infty$ estimates for solutions of (\ref{diffineq}) on Riemannian manifolds under the weakest possible assumptions. The main goal of this paper is to adapt several of the aforementioned results for the equation (\ref{diffeq}) to solutions of the inequality (\ref{diffineq}) on a 
Riemannian manifold with bounded geometry.

\subsection{Statement of results}\label{subsection:statementofresults}

In the sequel, \(M\) is a connected, complete 
Riemannian manifold of dimension \(n \geq 3\) with bounded geometry and \(A,B\) are nonnegative constants. We use \(M^{q, \lambda}\) to denote Morrey spaces of functions and \(H_t(f)\) to denote the integral of a function \(f\) against the heat kernel on 
\(M\). 
See \textsection \ref{section:preliminaries} for definitions. \par 
We first give the hypotheses on the solutions to (\ref{diffineq}) that we herein consider.
\begin{definition}\label{defn:weaksolution}
        Let \(p, q, r\), and \(T\) be constants such that
        \begin{align*}
            p > 1 + \frac{2}{n}, \quad 1 < q \leq q_c := \frac{n(p-1)}{2}, \quad 1 \leq \frac{r}{p} < q < r, \quad T > 0.
        \end{align*}Set
        \begin{align*}
            \lambda := \frac{2q}{p-1}.
        \end{align*} We say \(u\) is a solution to (\ref{diffineq}) on \([0, T)\) if:
        \begin{enumerate}
            \item \(u \in C_{\text{loc}}^0([0, T), M^{q, \lambda}) \cap C_{\text{loc}}^0((0, T), L^\infty)\). 
            \item \(u(t)\) satisfies 
        \begin{align*}
            \lim_{t \searrow 0} \left(\|u(t) - u(0)\|_{M^{q, \lambda}} + t^{\frac{1}{p-1}} \|u(t)\|_\infty + t^{\frac{\lambda}{2}\left(\frac{1}{q} - \frac{1}{r}\right)}\|u(t)\|_{M^{r, \lambda}}\right)  = 0.
        \end{align*}
        \item For each \(t \in [0, T)\), we have \(u(t) \geq 0\) a.e., and for \(0 \leq t' < t < T\) we have a.e.
        \begin{align}\label{weaksolution:subsolution}
            u(t) \leq H_{t-t'}(u(t')) + \int_{t'}^t H_{t-s}(Au^p(s) + Bu(s)) \, ds.
        \end{align}
        \end{enumerate}
    \end{definition}
\begin{thm} \label{thm:Morreysupbound}
There exists \(\delta_0 > 0\), depending on \(n, p, q, r, A, B\), and the geometry of \(M\), such that the following holds. \par 
Let \(\delta \in (0, \delta_0]\) and \(s \in [1, q_c]\). Suppose \(u\) is a solution of (\ref{diffineq}) on \([0, T)\), satisfying
		\begin{align}\label{Morreysupbound:criticalinitialhypothesis}
		\|u(0)\|_{M^{q, \lambda}} + 
        \sup_{(x, t) \in M \times [0, T)} \|u(t)\|_{L^s(B_1(x))} < \delta.
		\end{align} 
  Then, 
		\begin{align}\label{Morreysupbound:criticalestimate}
		 \|u(t)\|_{L^\infty} \leq C_{(\ref{Morreysupbound:criticalestimate})}\delta \left( t^{-\frac{1}{p-1}} + 1 \right), \ \ \ 0 < t < T. 
		\end{align}
  Here, \(C_{(\ref{Morreysupbound:criticalestimate})}\) is defined by (\ref{Morreysupbound:longconst}) and depends on \(n, B\), and the geometry of \(M\). \par
  Let $K > 0,$ and let $q' \in [1, \infty]$ and \(\lambda' \in [0, n]\) satisfy
    $$\alpha := \frac{\lambda'}{2q'} < \frac{1}{p-1}.$$
  Suppose that in addition to (\ref{Morreysupbound:criticalinitialhypothesis}) we have
  \begin{align}\label{Morreysupbound:improvedauxi}
       u \in C_{\text{loc}}^0([0, T), M^{q', \lambda'}), \quad \|u(0)\|_{M^{q', \lambda'}} \leq K, \quad  \lim_{t \searrow 0} t^{\alpha}\|u(t)\|_\infty = 0.
  \end{align} Then,
  \begin{align}\label{Morreysupbound:improvedestimate}
		\|u(t)\|_{L^\infty} \leq C_{(\ref{Morreysupbound:improvedestimate})} t^{-\alpha} \wedge C_{(\ref{Morreysupbound:criticalestimate})}\delta  t^{-\frac{1}{p-1}} + C_{(\ref{Morreysupbound:criticalestimate})}\delta , \ \ \ 0 < t < T.
    \end{align}
    Here, \(C_{(\ref{Morreysupbound:improvedestimate})}\) is defined by (\ref{Morreysupbound:improvedconst}) and depends on \(K, n, p, \alpha, A, B,\) and the geometry of \(M\).
\end{thm}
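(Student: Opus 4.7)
The plan is to combine the integral subsolution inequality \eqref{weaksolution:subsolution} with heat-semigroup mapping estimates between Morrey spaces on $M$ (available from bounded geometry via Gaussian upper bounds for $H_t$, presumably established in \textsection\ref{section:preliminaries}). I split the argument into two regimes: a short-time Souplet-style bootstrap in the spirit of \cite{semilinearmorrey} yielding $\|u(t)\|_\infty \leq C\delta\, t^{-1/(p-1)}$ on some fixed interval $(0,T_0]$, and a long-time restart argument using the $L^s(B_1)$ hypothesis to propagate the uniform bound $\|u(t)\|_\infty \leq C\delta$ for all $t \in [T_0,T)$.

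For the short-time step I work with the solution norm
\[
N(T') := \sup_{0\leq t<T'}\|u(t)\|_{M^{q,\lambda}} + \sup_{0<t<T'} t^{1/(p-1)}\|u(t)\|_\infty + \sup_{0<t<T'} t^{\frac{\lambda}{2q}\left(\frac{1}{q}-\frac{1}{r}\right)}\|u(t)\|_{M^{r,\lambda}}.
\]
Definition \ref{defn:weaksolution}(2) gives $\limsup_{T'\to 0^+}N(T') \leq \|u(0)\|_{M^{q,\lambda}}<\delta$, and $N$ is lower semicontinuous in $T'$. Taking the three norms of \eqref{weaksolution:subsolution} with $t'=0$ and applying the semigroup estimates $H_t\colon M^{q,\lambda}\to L^\infty$ with factor $t^{-1/(p-1)}$, $H_t\colon M^{q,\lambda}\to M^{r,\lambda}$ with factor $t^{-\frac{\lambda}{2q}(1/q-1/r)}$, and boundedness of $H_t$ on $M^{q,\lambda}$ (all on a fixed bounded time interval, with $e^{Ct}$ factors absorbed into constants), then writing $u^p = u^{p-1}u$ to factor out $\|u(s)\|_\infty^{p-1}$ from the nonlinear term, I expect to obtain a closed inequality of the form $N(T')\leq \delta + C_1 N(T')^p + C_2 T' N(T')$ on $[0,T_0]$ for some $T_0\sim 1$ depending on the geometry and $p,q,r,A,B$. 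For $\delta$ small this closes by continuity to $N(T')\leq 2\delta$, giving $\|u(t)\|_\infty \leq 2\delta t^{-1/(p-1)}$ on $(0,T_0]$.

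For the long-time step the key ingredient is a localized heat-kernel estimate valid on bounded geometry: for $0<\tau\leq 1$ and $f\geq 0$,
\[
\|H_\tau f\|_\infty \leq C\tau^{-n/(2s)}\sup_{x\in M}\|f\|_{L^s(B_1(x))},
\]
proved by dyadic annular decomposition and Gaussian decay. Fixing a small $\tau_0\leq T_0$ to be chosen, apply \eqref{weaksolution:subsolution} with $t'=t-\tau_0$ for any $t\geq T_0$; the linear piece $H_{\tau_0}u(t-\tau_0)$ is then bounded by $C\tau_0^{-n/(2s)}\delta$ via the $L^s(B_1)$ hypothesis, while pulling $\|u\|_\infty^{p-1}$ out of the nonlinear integral produces a linear-in-$u$ remainder. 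Setting $\Lambda(T') := \sup_{T_0\leq t<T'}\|u(t)\|_\infty$ and using $\|u(T_0)\|_\infty \leq C\delta$ from the short-time step, a continuity/bootstrap argument in $T'$ gives
\[
\Lambda(T')\bigl(1 - C_3\tau_0(\Lambda(T')^{p-1} + B)\bigr) \leq C_4\tau_0^{-n/(2s)}\delta.
\]
Choosing $\tau_0$ small depending only on $B, p$, and requiring $\delta$ small, this closes to $\Lambda(T') \leq C\delta$ uniformly in $T' < T$. Combined with the short-time bound, this yields \eqref{Morreysupbound:criticalestimate}.

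For the improved estimate \eqref{Morreysupbound:improvedestimate}, I would run the same short-time bootstrap in the Morrey class $M^{q',\lambda'}$, with the weight $t^{1/(p-1)}$ replaced by $t^\alpha$ and a corresponding auxiliary $M^{r',\lambda'}$ norm. The hypothesis \eqref{Morreysupbound:improvedauxi} plays the role of Definition \ref{defn:weaksolution}(2) in the stronger class. The key point is that $K$ need not be small: in the bootstrap for the corresponding norm $N'(T')$, the nonlinear contribution carries an extra factor $\|u\|_\infty^{p-1}$, which the already-established critical bound $\|u(t)\|_\infty \leq 2\delta t^{-1/(p-1)}$ makes arbitrarily small after integration (since $\alpha < 1/(p-1)$ by the hypothesis on $q',\lambda'$). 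The closed inequality then reads $N'(T')\leq K + C_1\delta^{p-1}N'(T') + C_2T'N'(T')$, which closes unconditionally in $K$ for $\delta$ small, giving $\|u(t)\|_\infty \leq C(K)t^{-\alpha}$ on $(0,T_0']$. Taking the minimum with the critical rate and adding the long-time constant bound produces \eqref{Morreysupbound:improvedestimate}. The main obstacle I anticipate is the long-time step: verifying that the $L^s$-on-balls smallness simultaneously controls the localized heat-kernel term and the nonlinear accumulation over an arbitrarily long interval, which is what forces the small-$\tau_0$ restart to be uniform in $t$ rather than iterated step-by-step.
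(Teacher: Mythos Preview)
Your short-time bootstrap has a genuine gap: with $t'=0$ in \eqref{weaksolution:subsolution} and the factoring $u^p=u^{p-1}u$ via $\|u(s)\|_\infty^{p-1}$, the Duhamel integrals diverge. For the $L^\infty$ component of $N$,
\[
t^{\frac{1}{p-1}}\int_0^t \|u(s)\|_\infty^{p}\,ds \;\leq\; t^{\frac{1}{p-1}} N^{p}\int_0^t s^{-\frac{p}{p-1}}\,ds \;=\;\infty,
\]
and the $M^{q,\lambda}$ and $M^{r,\lambda}$ components likewise pick up a non-integrable $s^{-1}$ factor. You cannot reach the closed inequality $N\leq \delta + C_1 N^{p}+C_2 T' N$ this way. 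The paper (following Souplet) avoids this by exploiting the constraint $1\leq r/p$ in Definition~\ref{defn:weaksolution}: one estimates $\|v^{p}\|_{r/p,\lambda}=\|v\|_{r,\lambda}^{p}$ and uses $H_{t-s}\colon M^{r/p,\lambda}\to M^{r,\lambda}$, which converts the nonlinear integral into a convergent Beta integral $\int_0^t (t-s)^{-q/r}s^{-\beta p}\,ds$ with $\beta p<1$ (this is where $q<r$ enters). The $L^\infty$ bound is then obtained in a \emph{second} continuity step, applying \eqref{weaksolution:subsolution} with $t'=t/2$, so the nonlinear integral runs over $[t/2,t]$ and the singularity at $s=0$ never appears. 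Your improved-estimate bootstrap has the same divergence if taken literally with $t'=0$; the paper again uses $t'=t/2$ for the $L^\infty$ part, and for the $M^{q',\lambda'}$ part bounds $\|v\|_\infty^{p-1}$ by the $t^{-\alpha(p-1)}$ rate being bootstrapped (so that $\int_0^t s^{-\alpha(p-1)}\,ds<\infty$ since $\alpha(p-1)<1$), closing only on a $K$-dependent interval $(0,T')$.

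Your long-time restart via the localized estimate $\|H_{\tau_0}f\|_\infty\lesssim \tau_0^{-n/(2s)}\sup_x\|f\|_{L^s(B_1(x))}$ is a different route from the paper's but should work. The paper instead interpolates the $L^s(B_1)$ smallness against the bootstrapped $L^\infty$ bound to get $\sup_x\|u(t)\|_{L^{q_c}(B_1(x))}$ small, and then reruns the entire short-time argument (now with $q=q_c$) on each unit interval $[t-1,t]$. One advantage of the paper's route is that the resulting constant $C_{(\ref{Morreysupbound:criticalestimate})}$ is independent of $s$, whereas your $\tau_0^{-n/(2s)}$ factor introduces $s$-dependence.
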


\subsection{Applications} 
We first give an application of the preceding result to harmonic map flow. Recall that the energy density \(|dw|\) of a smooth solution \(w(t) : M \to N\) of harmonic map flow between closed Riemannian manifolds \(M, N\) satisfies (\ref{diffineq}) with \(p = 3\).  By work of Wang \cite{hmfLngradient, hmfbmo}, we have that given a Sobolev map \(w_0 \in W^{1, n}(M, N)\), where \(n = \text{dim}(M)\), there exists a 
short-time regular solution of harmonic map flow. 
We show that the energy density of such a solution satisfies the conditions of Definition \ref{defn:weaksolution}, 
allowing us to 
prove the following 
version of Struwe's classical small-data regularity result for harmonic map flow \cite[Theorem 7.1]{struwehigherdimhmf}. 

\begin{thm}\label{thm:regularballtheorem}
Let \(M\) and \(N\) be closed Riemannian manifolds, and let \(w_0 \in W^{1, n}(M, N)\), where \(n = \dim(M)\). There exists \(\delta_1 > 0\), depending on the geometries of \(M\) and \(N\), such that if \(\delta \in (0, \delta_1]\) and \(\|dw_0\|_{M^{2, 2}} < \delta\), 
then Wang's solution of harmonic map flow with $w(0) = w_0$ exists for all time and converges exponentially in \(C^k\) for all \(k\) to a constant map as \(t \to \infty\).
\end{thm}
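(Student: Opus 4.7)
My plan is to apply Theorem \ref{thm:Morreysupbound} to the energy density $u := |dw|$ of Wang's short-time smooth solution, and then use the resulting $L^\infty$ bound to bootstrap to global existence and exponential convergence.

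The starting point is the Bochner identity for harmonic map flow: using an isometric embedding $N \hookrightarrow \R^K$, one has
\[
\left(\tfrac{\partial}{\partial t} - \Delta\right)|dw|^2 \leq -2|\nabla dw|^2 + C_N |dw|^4 + C_M |dw|^2,
\]
where $C_N, C_M$ depend on the geometries of $N$ and $M$. Combined with Kato's inequality $|\nabla|dw|| \leq |\nabla dw|$, this yields
\[
\left(\tfrac{\partial}{\partial t} - \Delta\right)|dw| \leq C_N|dw|^3 + C_M|dw|,
\]
so $u$ satisfies (\ref{diffineq}) with $p = 3$, $A = C_N$, $B = C_M$, whence $q_c = n \geq 3$ and the critical exponents $(q, \lambda) = (2, 2)$ match the $M^{2,2}$ hypothesis on $dw_0$. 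The first task is to verify that $u$, as the energy density of Wang's short-time solution, satisfies conditions (1)--(3) of Definition \ref{defn:weaksolution} (with some $r \in (2, 6)$): the Morrey regularity and the $t \searrow 0$ limit conditions follow from the parabolic Morrey estimates inherent in the construction of Wang's solution, while the integral subsolution inequality (\ref{weaksolution:subsolution}) is obtained by applying Duhamel's formula to the pointwise inequality above on $[\varepsilon, T)$ and letting $\varepsilon \searrow 0$.

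The core step is a bootstrap that feeds the $L^s$-on-balls hypothesis of Theorem \ref{thm:Morreysupbound} into itself with $s=2$. Since $\|v\|_{L^2(B_1(x))} \leq \|v\|_{M^{2,2}}$ for all $x \in M$, and since Wang's solution is continuous into $M^{2,2}$ at $t=0$, for any $\delta_0$ as in Theorem \ref{thm:Morreysupbound} one may, by choosing $\delta_1 \ll \delta_0$, define
\[
T^\ast := \sup\left\{T \in (0, T_{\max}] : \sup_{x \in M,\ t \in [0, T)} \|u(t)\|_{L^2(B_1(x))} \leq \tfrac{1}{2}\delta_0\right\} > 0.
\]
On $[0, T^\ast)$ the hypothesis (\ref{Morreysupbound:criticalinitialhypothesis}) holds, so Theorem \ref{thm:Morreysupbound} gives $\|u(t)\|_{L^\infty} \leq C\delta_1(t^{-1/2}+1)$. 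For $t \geq 1$ this bounds $\|u(t)\|_{L^2(B_1(x))}$ by $C'\delta_1$, strictly below $\tfrac{1}{2}\delta_0$; for $0 < t \leq 1$, Morrey continuity at $t = 0$ combined with a reapplication of Theorem \ref{thm:Morreysupbound} on short successive subintervals keeps $\|u(t)\|_{M^{2,2}}$ below $\tfrac{1}{4}\delta_0$ before the $L^\infty$ bound takes over. This closes the bootstrap and forces $T^\ast = T_{\max}$; standard parabolic Schauder theory together with Wang's continuation criterion then give $T_{\max} = \infty$.

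Finally, for convergence as $t \to \infty$, the uniform bound $\|u(t)\|_{L^\infty} \leq 2C\delta_1$ for $t \geq 1$ combined with parabolic bootstrapping yields uniform $C^k$ bounds on $w$ for large $t$. For $\delta_1$ small enough, the image of $w(t)$ eventually lies in a single normal-coordinate chart of $N$, where the flow equation becomes a semilinear heat system with a quadratic first-order nonlinearity; linearizing about a limiting constant and using the spectral gap of $\Delta$ on $M$ together with the Dirichlet energy identity $\dot E = -\int_M|\partial_t w|^2$ yields exponential $L^2$-decay, which upgrades to exponential $C^k$-decay for every $k$ by interpolation with the uniform bounds. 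The main obstacle will be closing the bootstrap near $t = 0$, where the $L^\infty$ estimate of Theorem \ref{thm:Morreysupbound} degenerates like $t^{-1/2}$ and one must rely on the $M^{2,2}$ continuity built into Wang's construction to maintain the $L^2$-on-balls bound.
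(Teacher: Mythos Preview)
Your approach is correct in outline and close to the paper's, but it is more elaborate than necessary, and the paper's route avoids precisely the step you flag as the ``main obstacle.''

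The paper does not run a bootstrap on the $L^2$-on-balls quantity at all. Instead it observes that on $[0,\min\{T,1\})$ the $T\le 1$ part of the proof of Theorem~\ref{thm:Morreysupbound} uses only the initial Morrey bound $\|u(0)\|_{M^{2,2}}<\delta$ and the subsolution structure of Definition~\ref{defn:weaksolution}; the $L^s(B_1(x))$ hypothesis is invoked only for $t>2$. Hence once Lemma~\ref{lemma:checkinghmfisMorreysolution} verifies that $u=|dw|$ satisfies Definition~\ref{defn:weaksolution} (with $p=3$, $q=2$, $r=3$), Theorem~\ref{thm:Morreysupbound} immediately yields $\|dw(t)\|_\infty\le C\delta(t^{-1/2}+1)$ on $(0,\min\{T,1\})$, forcing $T\ge 1$. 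There is no need to close a bootstrap near $t=0$: the smallness of $\|u(t)\|_{M^{2,2}}$ on $(0,1)$ is already established inside the proof of Theorem~\ref{thm:Morreysupbound} (inequality (\ref{Morreysupbound:Morreystayssmall})) from the initial data alone.

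For long-time behavior the paper does not continue the Morrey bootstrap. Since $\|dw(1)\|_\infty\le 2C\delta$ is small, the image of $w(1)$ lies in a regular ball of $N$, and the paper hands off to Lemma~\ref{lemma:regularballlemma}, which packages the maximum-principle confinement, global existence, and exponential $C^k$-convergence for harmonic map flow with image in a regular ball. Your direct argument (normal chart, spectral gap of $\Delta$, energy identity) is essentially the content of that lemma's proof, so you end up reproving it; invoking the regular-ball result is shorter. In summary, your plan works, but the paper's two simplifications---restricting Theorem~\ref{thm:Morreysupbound} to $t\le 1$ so no $L^s$ bootstrap is needed, and citing the regular-ball flow lemma for $t\ge 1$---eliminate the part you identified as delicate.
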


In a companion paper \cite{gappaper}, we develop analogous results for Yang-Mills flow in higher dimensions: In particular, \cite[Theorem 1.4]{gappaper} is directly analogous to Theorem \ref{thm:regularballtheorem}. 
We note that Theorem \ref{thm:regularballtheorem} in this paper and \cite[Theorem 1.4]{gappaper} also follow from a monotonicity formula and epsilon-regularity result for the energy densities of the respective flows. However, in the context of Yang-Mills flow on a special-holonomy manifold, such a monotonicity formula does not hold for the individual components of the curvature and the estimates of the present paper are essential; see \cite[Theorem 1.3]{gappaper}. 

Last, we use Theorem \ref{thm:regularballtheorem} to 
give a version of a theorem of White \cite[Cor. 2 and 3 on pg. 128]{whitejdg} concerning the infimum of the functional \(\int_M |dw|^n\) on homotopy classes of maps between closed manifolds. 
\begin{cor}\label{gapcor}
    Let \(M\) and \(N\) be closed Riemannian manifolds and $\gamma \subset C(M,N)$ a homotopy class of maps between $M$ and $N.$ 
    Then, $\gamma$ is trivial if and only if
    \begin{align}\label{Morreyinfimum}
        \inf\{ \|dw\|_{M^{2, 2}(M)} \mid w \in \gamma \cap W^{1, n}(M, N) \} = 0. 
    \end{align}
\end{cor}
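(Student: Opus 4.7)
The plan is to derive the corollary as a direct consequence of Theorem \ref{thm:regularballtheorem}, handling the two directions of the biconditional separately.

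For the forward direction, triviality of $\gamma$ means it contains a constant map $c_*$, for which $dc_* \equiv 0$ and hence $\|dc_*\|_{M^{2,2}} = 0$; so (\ref{Morreyinfimum}) holds.

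For the reverse direction, assume (\ref{Morreyinfimum}) and let $\delta_1$ be the constant furnished by Theorem \ref{thm:regularballtheorem}. Choose $w_0 \in \gamma \cap W^{1,n}(M,N)$ with $\|dw_0\|_{M^{2,2}} < \delta_1$. By that theorem, Wang's harmonic map flow $w(t)$ with $w(0)=w_0$ is defined on $[0,\infty)$, smooth for $t>0$, and converges exponentially in $C^k$ for every $k$ to a constant map $c$ as $t\to\infty$. The flow itself provides a homotopy from $w_0$ to $c$: on $(0,\infty)$ the map $t\mapsto w(t)$ is smooth into $C^\infty(M,N)$, while the exponential $C^k$ convergence guarantees that for $t$ large enough $w(t)$ is $C^0$-close to (hence freely homotopic to) $c$. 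Concatenating gives $w_0 \simeq c$, so $\gamma$ is the trivial homotopy class.

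The main technical point is to justify that $w(t)\in\gamma$ for every $t\geq 0$ given that $w_0\in W^{1,n}$ need not be continuous, since $W^{1,n}$ fails to embed in $C^0$ at the critical exponent $p=n$. I would resolve this via the Brezis--Nirenberg framework for homotopy classes of $\mathrm{VMO}$ maps: $W^{1,n}(M,N)\subset \mathrm{VMO}(M,N)$, the homotopy class of a map in $\mathrm{VMO}(M,N)$ is locally constant in the $W^{1,n}$ topology, and the continuity of Wang's flow into $W^{1,n}$ at $t=0$ (together with smoothness for $t>0$) keeps $w(t)$ in the homotopy class of $w_0$ throughout $[0,\infty)$. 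Hence $c\in\gamma$, completing the proof.
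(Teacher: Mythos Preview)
Your proof is correct but takes a somewhat different route from the paper's. First, a small misreading: since $\gamma \subset C(M,N)$, any $w_0 \in \gamma \cap W^{1,n}(M,N)$ is already continuous, so your concern that $w_0$ ``need not be continuous'' is misplaced. That said, your Brezis--Nirenberg VMO argument still goes through (indeed it becomes easier, since for continuous maps the VMO and $C^0$ homotopy classes coincide), and it correctly shows that $w(t)$ remains in $\gamma$ for all $t \geq 0$ using the $W^{1,n}$-continuity of Wang's flow at $t = 0$ established in Lemma~\ref{lemma:checkinghmfisMorreysolution}.

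The paper proceeds differently: rather than invoking external VMO homotopy theory, it first smooths the minimizing sequence via a Schoen--Uhlenbeck--type approximation lemma (Lemma~\ref{Morreyapproximation}), which produces for each continuous $w_i \in \gamma \cap W^{1,n}$ a \emph{smooth} map $w_i'$ homotopic to $w_i$ with $\|dw_i'\|_{M^{2,2}} \leq C_{M,N}\|dw_i\|_{M^{2,2}}$. Theorem~\ref{thm:regularballtheorem} is then applied to $w_i'$; for smooth initial data the flow is classically a $C^0$ homotopy, so no VMO machinery is needed. The paper's approach is thus more self-contained but pays the price of proving the approximation lemma, whereas your approach is quicker once Brezis--Nirenberg is granted. (Since $w_0$ is in fact continuous, you could also bypass VMO theory entirely by checking directly from the Duhamel formula and the estimates of Lemma~\ref{lemma:checkinghmfisMorreysolution} that $w(t) \to w_0$ in $C^0$.)
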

For the gauge-theoretic analogue of Corollary \ref{gapcor}, see \cite[Corollary 1.5]{gappaper}.
\section{Preliminaries}\label{section:preliminaries}
Let \(M\) be a 
Riemannian manifold of dimension \(n\) with Riemannian metric \(g\). We denote the covariant derivative induced by the Levi-Civita connection of \(g\) by \(\nabla\). Our convention for the Laplace-Beltrami operator is given by the following action on functions:
\begin{align*}
    \Delta f = g^{ij}\nabla_i \nabla_j f = \frac{1}{\sqrt{\det g}} \partial_i \left(\sqrt{\det g}g^{ij}\partial_j f\right).
\end{align*} We let \(\mu_g\) denote the Riemannian measure induced by \(g\).
\par 
We next define Morrey spaces of functions.
Suppose \(M\) has bounded geometry, i.e., the sectional curvatures are uniformly bounded and the injectivity radius is uniformly bounded away from zero.
It follows from the volume comparison theorems of G{\"u}nther and Bishop-Gromov 
that there exists \(V \geq 1\) such that 
for all \(R \in (0, 1)\) and \(x \in M\) we have
 \begin{align}\label{preliminaries:volume}
     V^{-1}R^n \leq 
     \mu_g(B_R(x)) \leq VR^n.
 \end{align} 
        \begin{defn}\label{defn:Morreydefinition}
		 Given \(q \in [1, \infty]\) and \(\lambda \in [0, n]\), we define the Morrey norm of a locally integrable function \(f\) to be 
    \begin{align*}
        \|f\|_{q, \lambda} := 
        \begin{cases}
            \sup_{x \in M, 0 < R \leq 1} \left(R^{\lambda-n} \int_{B_R(x)} |f|^q \, \mu_g\right)^{\frac{1}{q}} & q < \infty \\
            \|f\|_\infty & q = \infty.
        \end{cases}
    \end{align*} We define the Morrey space \(M^{q, \lambda}\) to be \(L^\infty\) if \(q = \infty\), and otherwise
		\begin{align}
		M^{q, \lambda} = \left\{f \in L_{\text{loc}}^1 \mid \|f\|_{q, \lambda} < \infty\right\}. \nonumber
		\end{align}
	\end{defn} 
	
It follows from H{\"o}lder's inequality that if \(q_1 \leq q_2\) and \(\lambda_2 \leq \frac{q_2}{q_1}\lambda_1\), then 
    \begin{align}\label{preliminaries:Morreyembedding}
        M^{q_2, \lambda_2} \subset M^{q_1, \lambda_1},
    \end{align} and the inclusion has operator norm bounded by a constant depending only on \(V\). 

 \par 
  Finally, we mention the heat kernel on manifolds. For a manifold with bounded geometry, there is a positive function
  \begin{align*}
      H \in C^\infty(M \times M \times (0, \infty))
  \end{align*} which is the unique minimal fundamental solution of the heat equation on \(M\) \cite[Theorem 4 in Chapter VIII.2]{chaveleigenvaluesbook}. We call \(H\) the heat kernel on \(M\), and we denote
    \begin{align*}
        H_t(f)(x) := \int_M H(x, y, t) f(y) \, \mu_g(y).
    \end{align*} 
\section{Initial data in Morrey spaces}\label{section:Morrey}

        We now adapt \cite[Prop. 49.17]{superlinearbook}, which gives estimates in Morrey spaces for solutions to the heat equation for Euclidean domains with Dirichlet boundary conditions, to the Riemannian setting.
        The following lemma lists properties of the heat kernel on 
        manifolds with bounded geometry that we will use.
        \begin{lemma}\label{lemma:heatkernel}
        Suppose \(M\) has bounded geometry. Then, there exist constants \(C_u, C_l > 0\), depending only on the geometry of \(M\), such that, letting  \(d(x, y)\) denote the Riemannian distance between \(x, y \in M\), the heat kernel on \(M\) satisfies
		\begin{align}\label{heatkernel:upperbound}
		  H(x, y, t) \leq C_ut^{-\frac{n}{2}}\exp\left(-\frac{d(x, y)^2}{C_ut}\right), \ \ \ 0 < t \leq 1,
		\end{align} and        \begin{align}\label{heatkernel:lowerbound}
            t^{-\frac{n}{2}}\exp\left(-\frac{d(x, y)^2}{2t}\right) \leq C_lH(x, y, t), \ \ \ 0 < t \leq 18C_u.
        \end{align} Furthermore,	\begin{align}\label{heatkernel:stochasticcompleteness}
		\int_M H(x, y, t) \, \mu_g(y) = 1, \ \ \ \forall \ x \in M, \  t > 0. 
		\end{align} 
    \end{lemma}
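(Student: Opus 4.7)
The plan is to derive each of the three statements as consequences of the standard heat-kernel theory on manifolds with bounded geometry, since no new analytic ingredients beyond the local two-sided volume bound already recorded in (\ref{preliminaries:volume}) should be needed.

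For the upper Gaussian bound (\ref{heatkernel:upperbound}), the first step is to observe that bounded geometry gives a uniform lower Ricci bound and a positive lower bound on the injectivity radius. Combined with (\ref{preliminaries:volume}), these hypotheses yield the uniform local volume doubling condition and the uniform local (scaled) Poincar\'e inequality on balls of radius at most one, both with constants depending only on the geometry. One can then apply the by-now-standard machinery of Grigor'yan and Saloff-Coste (the equivalence of parabolic Harnack, Poincar\'e + volume doubling, and two-sided Gaussian heat-kernel estimates) on the scale $t \leq 1$. This produces an estimate of exactly the form (\ref{heatkernel:upperbound}) with $C_u$ depending only on the geometry of $M$; I would simply cite Saloff-Coste's monograph \emph{Aspects of Sobolev-type inequalities} or Grigor'yan's \emph{Heat Kernel and Analysis on Manifolds}.

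For the lower Gaussian bound (\ref{heatkernel:lowerbound}), I would proceed in two steps. First, an on-diagonal lower bound $H(x,x,t) \geq c_0 t^{-n/2}$ for $0 < t \leq t_0$ can be obtained directly from the semigroup identity
\[ H(x,x,2t) = \int_M H(x,z,t)^2 \, \mu_g(z), \]
combined with Cauchy--Schwarz, stochastic completeness (item (\ref{heatkernel:stochasticcompleteness}) above), and the upper bound (\ref{heatkernel:upperbound}) applied to control the mass of $H(x,\cdot,t)$ outside a ball whose radius is tuned so that $\mu_g(B) \lesssim t^{n/2}$ thanks to (\ref{preliminaries:volume}). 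Second, the off-diagonal lower bound is derived by a chaining argument along a minimizing geodesic: one writes $H(x,y,t)$ as an $N$-fold convolution of $H(\cdot,\cdot,t/N)$ with $N \approx d(x,y)^2/t$ nodes chosen along the geodesic, restricts each intermediate integral to a small ball around the node (using the volume lower bound in (\ref{preliminaries:volume})), and applies the on-diagonal lower bound at each node. Optimizing the node spacing produces the Gaussian factor $\exp(-d(x,y)^2/(2t))$, with the specific constant $2$ and time range $t \leq 18 C_u$ arising from bookkeeping $C_u$ through the chaining computation.

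For stochastic completeness (\ref{heatkernel:stochasticcompleteness}), the plan is to invoke the classical result of Yau (see also Grigor'yan's volume-growth criterion) that geodesic completeness together with a uniform lower bound on Ricci curvature implies $\int_M H(x,\cdot,t)\,\mu_g = 1$; both hypotheses are immediate consequences of bounded geometry. The only real obstacle in the whole lemma is administrative, namely matching the precise constants $C_u, C_l$ and the time ranges $1$ and $18 C_u$ to those in the quoted references, which is done by absorbing universal factors into a single geometry-dependent constant.
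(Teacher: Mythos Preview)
Your proposal is correct, but the paper's proof is simply three direct citations: Cheng--Li--Yau for the upper bound (\ref{heatkernel:upperbound}), Cheeger--Yau for the lower bound (\ref{heatkernel:lowerbound}), and Yau for stochastic completeness (\ref{heatkernel:stochasticcompleteness}), with a pointer to \cite[\S A.4]{semilinearivpriem} for exposition. Your route differs mainly in the first two items. For the upper bound, you invoke the later Grigor'yan--Saloff-Coste equivalence (volume doubling plus Poincar\'e implies Gaussian bounds), which is correct but heavier machinery than the original Cheng--Li--Yau gradient-estimate argument the paper cites. For the lower bound, your on-diagonal-plus-chaining argument works, but the paper instead appeals to the Cheeger--Yau comparison theorem, which compares $H$ directly to the heat kernel on a constant-curvature model space using only the Ricci lower bound; this is both shorter and yields the Gaussian exponent cleanly, whereas chaining requires some optimization to land on a constant at least as good as $2$. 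Your treatment of stochastic completeness matches the paper's. In short, your approach is more self-contained and ``modern'' in flavor, while the paper's is more economical by citing the sharpest classical results directly; either is acceptable for a lemma of this kind. One small presentational point: you use (\ref{heatkernel:stochasticcompleteness}) inside the proof of (\ref{heatkernel:lowerbound}), so if you write it up this way you should establish stochastic completeness first.
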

\begin{proof}
    The bound (\ref{heatkernel:upperbound}) follows from \cite{chengliyauheatkernel}, and the bound (\ref{heatkernel:lowerbound})  follows from \cite{cheegeryau}; see \cite[\textsection A.4]{semilinearivpriem} for an exposition. The property (\ref{heatkernel:stochasticcompleteness}) follows from \cite{yauheatkernel}.
\end{proof}

\begin{prop}\label{prop:Morreyheatestimates}
	Let \(s_1, s_2,\) and \(\lambda\) be constants such that \(1 \leq s_1  \leq  s_2 \leq \infty\) and \(0 \leq \lambda \leq n\). Let \(M\) be a Riemannian manifold of dimension \(n\) with bounded geometry. Then, there exists a constant \(C_{(\ref{Morreyheatestimates:estimate})} \geq 1\), depending only on the geometry of \(M\), such that for all \(f \in M^{s_1, \lambda}\),
	\begin{align}\label{Morreyheatestimates:estimate}
	\|H_t(f)\|_{s_2, \lambda} \leq C_{(\ref{Morreyheatestimates:estimate})}t^{-\frac{\lambda}{2}(\frac{1}{s_1} - \frac{1}{s_2})}\|f\|_{s_1, \lambda}, \ \ 0 < t \leq 1.
	\end{align}
	\end{prop}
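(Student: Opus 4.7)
The plan is to follow the Euclidean argument of \cite[Prop.\ 49.17]{superlinearbook}, with the explicit Gaussian kernel replaced by the Gaussian upper bound (\ref{heatkernel:upperbound}). I will first establish the pointwise \(s_2 = \infty\) case, then bootstrap to general \(s_2 < \infty\).

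For the pointwise bound, decompose \(M = B_{\sqrt t}(x) \cup \bigcup_{k \geq 1} A_k(x)\) with \(A_k(x) := B_{2^k\sqrt t}(x) \setminus B_{2^{k-1}\sqrt t}(x)\). On \(A_k\), the bound (\ref{heatkernel:upperbound}) gives \(H(x,y,t) \leq C_u t^{-n/2} e^{-4^{k-1}/C_u}\), while H\"older combined with the Morrey definition produces
\[
\int_{B_R(x)} |f|\,d\mu_g \leq C R^{n - \lambda/s_1}\|f\|_{s_1,\lambda}
\]
for \(R \leq 1\); the same bound extends to larger \(R\) via a bounded-geometry covering by unit balls. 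Summing the geometric factors \(2^{k(n-\lambda/s_1)}\) against the Gaussian \(e^{-4^{k-1}/C_u}\) yields \(\|H_t f\|_\infty \leq C t^{-\lambda/(2 s_1)}\|f\|_{s_1,\lambda}\), which is the \(s_2=\infty\) case.

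For general \(s_2\), fix \(x_0 \in M\) and \(R \in (0,1]\) and split into two subcases according to the size of \(R\) relative to \(\sqrt t\). When \(R \leq \sqrt t\), apply the \(L^\infty\) bound just established together with the volume bound (\ref{preliminaries:volume}); the resulting factor \(R^{\lambda/s_2}\) is bounded by \(t^{\lambda/(2s_2)}\), producing exactly the claimed scaling \(t^{-\frac{\lambda}{2}(1/s_1-1/s_2)}\). When \(R > \sqrt t\), split \(f = f\chi_{B_{2R}(x_0)} + f\chi_{M \setminus B_{2R}(x_0)} =: f_1 + f_2\). For \(f_1\), combine the standard \(L^{s_1}(M) \to L^{s_2}(M)\) heat-semigroup estimate \(\|H_t f_1\|_{s_2} \leq C t^{-\frac{n}{2}(1/s_1 - 1/s_2)} \|f_1\|_{s_1}\) (a consequence of (\ref{heatkernel:upperbound}) and Young's inequality) with the Morrey bound \(\|f_1\|_{s_1} \leq C R^{(n-\lambda)/s_1}\|f\|_{s_1,\lambda}\); the \(R\)-powers collapse against the \(R^{(\lambda-n)/s_2}\) normalization, leaving precisely \(t^{-\frac{\lambda}{2}(1/s_1-1/s_2)}\). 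For \(f_2\), apply Minkowski's integral inequality to the \(L^{s_2}(B_R(x_0))\) norm of \(H_t f_2\) and then decompose in dyadic annuli around \(x_0\): on \(B_{2^{k+1}R}(x_0) \setminus B_{2^k R}(x_0)\), every \(x \in B_R(x_0)\) satisfies \(d(x,y) \geq (2^k-1)R\), so the Gaussian decays like \(e^{-(2^k-1)^2 R^2/(C_u t)}\). Since \(R^2/t \geq 1\), the sum over \(k\) is controlled by its leading term and the remaining algebraic powers recombine to the desired exponent.

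The main technical obstacle is bookkeeping the exponents in the far-field computation so that exactly \(t^{-\frac{\lambda}{2}(1/s_1 - 1/s_2)}\) emerges from the two very different contributions (\(f_1\) versus \(f_2\)), and dealing with the fact that the Morrey norm is only defined on balls of radius at most \(1\) by replacing large balls with controlled unions of unit balls via bounded geometry. The super-exponential Gaussian decay in Lemma \ref{lemma:heatkernel} dominates any volume-growth losses from such covers, so no new ideas beyond careful exponent matching are required.
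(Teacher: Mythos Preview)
Your \(s_2=\infty\) step is fine and is essentially the paper's Case (iii). The gap is in your treatment of \(f_1\) in the regime \(R>\sqrt t\) for \(s_1<s_2<\infty\). The \(R\)-powers do \emph{not} collapse: combining \(R^{(\lambda-n)/s_2}\) from the Morrey normalization with \(R^{(n-\lambda)/s_1}\) from \(\|f_1\|_{s_1}\) leaves \(R^{(n-\lambda)(1/s_1-1/s_2)}\), and the global \(L^{s_1}\to L^{s_2}\) smoothing gives \(t^{-\frac{n}{2}(1/s_1-1/s_2)}\), not \(t^{-\frac{\lambda}{2}(1/s_1-1/s_2)}\). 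Thus your bound for the \(f_1\) contribution is
\[
C\,R^{(n-\lambda)\left(\frac{1}{s_1}-\frac{1}{s_2}\right)}\,t^{-\frac{n}{2}\left(\frac{1}{s_1}-\frac{1}{s_2}\right)}\|f\|_{s_1,\lambda}
= C\left(\frac{R}{\sqrt t}\right)^{(n-\lambda)\left(\frac{1}{s_1}-\frac{1}{s_2}\right)} t^{-\frac{\lambda}{2}\left(\frac{1}{s_1}-\frac{1}{s_2}\right)}\|f\|_{s_1,\lambda},
\]
and since \(R>\sqrt t\), \(\lambda<n\), and \(s_1<s_2\), the prefactor \((R/\sqrt t)^{(n-\lambda)(1/s_1-1/s_2)}\) is unbounded as \(t\to 0\) with \(R\) fixed. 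So the claimed exponent does not emerge from this route.

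The paper sidesteps this by first proving the \emph{diagonal} case \(s_1=s_2\) (boundedness of \(H_t\) on \(M^{s_1,\lambda}\) with no \(t\)-loss), via Jensen's inequality and a near/far split that uses the heat-kernel lower bound (\ref{heatkernel:lowerbound}) together with stochastic completeness to sum the far contribution. The general \(s_2<\infty\) case then follows by interpolating
\[
\|H_t f\|_{s_2,\lambda}\leq \|H_t f\|_\infty^{\,1-s_1/s_2}\,\|H_t f\|_{s_1,\lambda}^{\,s_1/s_2},
\]
which produces exactly \(t^{-\frac{\lambda}{2}(1/s_1-1/s_2)}\). Your argument can be repaired along the same lines: replace the global \(L^{s_1}\to L^{s_2}\) estimate for \(f_1\) by the pointwise \(L^\infty\) bound you already proved, combined with the \(L^{s_1}\to L^{s_1}\) bound \(\|H_t f_1\|_{s_1}\leq C\|f_1\|_{s_1}\); then the \(R\)-powers really do cancel. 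As written, however, the \(f_1\) step fails.
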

	\begin{proof} 
                Since \(M\) has bounded geometry, for estimating the Morrey norm, 
                we may restrict our attention to balls of radius \(R \in (0, R_0]\) for some fixed \(R_0\) depending on \(M\).
                We shall assume $R_0$ is sufficiently small that the following hold for some dimensional constant \(C_n \geq 2\).
                \begin{enumerate}
                    \item \label{Morreyheatestimates:rhoinfinity} Let \(\kappa > 0\) be such that the absolute value of any sectional curvature of \(M\) is bounded above by \(\kappa\). We then take \(R_0 \leq \rho_\infty := 4^{-1}\min\left\{1, \text{inj}(M), \kappa^{-1}\pi\right\} > 0\), as in  \cite[(2.2)]{semilinearivpriem}.
                    \item \label{Morreyheatestimates:uniformeuclidean} The metric in geodesic coordinates on any ball of radius \(6R_0\) satisfies
                    \begin{align*}
                        \frac{1}{2}g_{\text{Euc}} \leq g \leq 2g_{\text{Euc}}.
                    \end{align*} 
                    \item \label{Morreyheatestimates:uniformtransition} For any $x, y$ with $d(x,y) \leq 5R_0$, the Jacobian determinant of the  transition map between geodesic coordinates on the balls of radius \(6R_0\) centered at \(x, y\) is bounded above by \(C_n\).
                    \item \label{Morreyheatestimates:uniformcover} For each \(R \in (0, R_0]\), there exists a cover of $M$ by balls $B_{R}(p_i)$ such that for each \(j\), no more than \(C_n\) of the balls in the cover intersect \(B_R(p_{j})\). We also assume that any ball of radius \(2R\) can be covered by \(C_n\) balls of radius \(R\) (not necessarily in the cover).
                \end{enumerate}

We break the proof of (\ref{Morreyheatestimates:estimate}) into the following four cases:
\begin{enumerate}
    \item[(i)] \(s_1 = s_2 = \infty\);
    \item[(ii)] \(s_1 = s_2 \in [1, \infty)\);
    \item[(iii)] \(s_1 \in [1, \infty), s_2 = \infty\);
    \item[(iv)] \(1 \leq s_1 < s_2 < \infty.\)
\end{enumerate} \par
\emph{Case i: \(s_1 = s_2 = \infty\).}
This follows, with \(C_{(\ref{Morreyheatestimates:estimate})} = 1\), from (\ref{heatkernel:stochasticcompleteness}) since \(M^{\infty, \lambda} = L^\infty\). \par 
\emph{Case ii: \(s_1 = s_2 \in [1, \infty)\).}
   Let \(x_0 \in M\) and \(R \in (0, R_0]\). We first show that 
   \begin{align}\label{Morreyheatestimates:case2step1}
       \int_{B_R(x_0)} \int_M H(x, y, t) |f(y)|^{s_1} \, \mu_g(y) \, \mu_g(x) \leq C R^{n-\lambda} \|f\|_{s_1, \lambda}^{s_1}
   \end{align} for some \(C > 0\) depending on the geometry of \(M\). By the heat kernel upper bound (\ref{heatkernel:upperbound}), the LHS of (\ref{Morreyheatestimates:case2step1}) is
   \begin{align*}
      \leq C_u t^{-\frac{n}{2}}\bigg(\underbrace{\int_{B_R(x_0)}\int_{B_{4R}(x_0)}}_{\text{I}} + \underbrace{\int_{B_R(x_0)}\int_{M \setminus B_{4R}(x_0)}}_{\text{II}}\bigg) \, e^{-\frac{d(x, y)^2}{C_ut}} |f(y)|^{s_1} \, \mu_g(y) \, \mu_g(x).
   \end{align*} We now estimate the integrals \(\text{I}\) and \(\text{II}\), starting with \(\text{I}\). Choose geodesic coordinates
   \begin{align*}
       N_{x_0} : B_{6R}(0) \subset \R^n \to B_{6R}(x_0) \subset M.
   \end{align*} For $w \in B_R(0) \subset \R^n,$ denote the set $w - B_{4R}(0) := \{w - z \mid z \in B_{4R}(0)\}.$ By the Euclidean bounds per item (\ref{Morreyheatestimates:uniformeuclidean}),
   \begin{align*}
       \text{I} &\leq 2^n \int_{B_R(0)} \int_{B_{4R}(0)} e^{-\frac{|w-z|^2}{2C_ut}} |f(N_{x_0}(z))|^{s_1} \, \mu_{\text{Euc}}(z) \, \mu_{\text{Euc}}(w) \\
       &= 2^n \int_{B_R(0)} \int_{w-B_{4R}(0)} e^{-\frac{|z|^2}{2C_ut}} |f(N_{x_0}(w-z))|^{s_1} \, \mu_{\text{Euc}}(z) \, \mu_{\text{Euc}}(w) \\
       &\leq 2^n  \int_{B_{5R}(0)} e^{-\frac{|z|^2}{2C_ut}} \int_{B_R(0)}|f(N_{x_0}(z+w))|^{s_1} \, \mu_{\text{Euc}}(w) \, \mu_{\text{Euc}}(z).
   \end{align*} 
   For \(z \in B_{5R}(0)\), let \(N_{N_{x_0}(z)}\) be some choice of geodesic coordinates on the ball of radius \(6R\) centered at \(N_{x_0}(z)\). Set
   \begin{align*}
       F(w) := N_{N_{x_0}(z)}^{-1}(N_{x_0}(z+w)).
   \end{align*} In view of item (\ref{Morreyheatestimates:uniformeuclidean}), 
   \begin{align*}
       N_{x_0}(z + B_R(0)) \subset N_{N_{x_0}(z)}(B_{2R}(0)).
   \end{align*} Hence by the bounds on the Jacobian of the transition between geodesic coordinates from item (\ref{Morreyheatestimates:uniformtransition}),
   \begin{align*}
       \int_{B_R(0)} |f(N_{x_0}(z+w))|^{s_1} \, \mu_{\text{Euc}}(w) &= \int_{B_R(0)} |f(N_{N_{x_0}(z)}(F(w)))|^{s_1} \, \mu_{\text{Euc}}(w) \\
       &\leq C_n \int_{B_{2R}(0)} |f(N_{N_{x_0}(z)}(w))|^{s_1} \, \mu_{\text{Euc}}(w) \\
       &\leq 2^{\frac{n}{2}} C_n \int_{B_{2R}(N_{x_0}(z))} |f(x)|^{s_1} \, \mu_g(x) \\
       &\leq 2^{\frac{n}{2}} C_n^2 R^{n-\lambda} \|f\|_{s_1, \lambda}^{s_1}.
   \end{align*} Since, for a dimensional constant \(C_1 > 0\), we have
   \begin{align*}
       \int_{B_{5R}(0)} e^{-\frac{|z|^2}{2C_u t}} \, \mu_{\text{Euc}}(z) \leq C_1 (C_u t)^{\frac{n}{2}},
   \end{align*}
   we obtain \begin{align}\label{Morreyheatestimates:I}
       \text{I} \leq R^{n-\lambda} (C_u t)^{\frac{n}{2}} 2^{\frac{3n}{2}} C_1 C_n^2 \|f\|_{s_1, \lambda}^{s_1}.
   \end{align} \par
   We now estimate integral \(\text{II}\).  Take a cover of \(M\) as in item (\ref{Morreyheatestimates:uniformcover}). If \(p_i\) is such that \(d(x_0, p_i) \geq 3R\), then the triangle inequality yields for \(x \in B_R(x_0)\) and \(y \in B_R(p_i)\) that
   \begin{align*}
       d(x_0, p_i) \leq 3d(x, y).
   \end{align*} Moreover, \(\{B_R(p_i) \mid d(x_0, p_i) \geq 3R\}\) covers \(M \setminus B_{4R}(x_0)\). 
   Therefore,
   \begin{align*}
       \text{II} &\leq \sum_{\{i \mid d(x_0, p_i) \geq 3R\}} \int_{B_R(x_0)} \int_{B_R(p_i)} e^{-\frac{d(x_0, p_i)^2}{9C_ut}} |f(y)|^{s_1} \, \mu_g(y) \, \mu_g(x).
    \end{align*} Since \(R \leq R_0\), we have by the volume bounds (\ref{preliminaries:volume}) that the RHS in the last line is
    \begin{align*}
        &\leq \sum_{\{i \mid d(x_0, p_i) \geq 3R\}} VR^ne^{-\frac{d(x_0, p_i)^2}{9C_ut}} \int_{B_R(p_i)} |f(y)|^{s_1} \, \mu_g(y) \\
        &\leq R^{n-\lambda}\|f\|_{s_1, \lambda}^{s_1} \sum_{\{i \mid d(x_0, p_i) \geq 3R\}} VR^ne^{-\frac{d(x_0, p_i)^2}{9C_ut}}.
   \end{align*} Now, since \(d(x_0, p_i) \geq 3R\), we have by the triangle inequality that for \(z \in B_R(p_i)\)
   \begin{align*}
       d(x_0, z) \leq 2d(x_0, p_i).
   \end{align*}
   Then, in conjunction with the volume bounds (\ref{preliminaries:volume}) again, we obtain
   \begin{align*}
       \sum_{\{i \mid d(x_0, p_i) \geq 3R\}} R^ne^{-\frac{d(x_0, p_i)^2}{9C_u t}} &\leq V \sum_{\{i \mid d(x_0, p_i) \geq 3R\}} \int_{B_R(p_i)} e^{-\frac{d(x_0, z)^2}{36C_u t}} \, \mu_g(z).
   \end{align*} By the intersection property of the cover as stated in item (\ref{Morreyheatestimates:uniformcover}), the RHS of this last line is
   \begin{align*}
       \leq VC_n \int_M e^{-\frac{d(x_0, z)^2}{36C_u t}} \, \mu_g(z).
   \end{align*}
   Finally, by the heat kernel lower bound (\ref{heatkernel:lowerbound}) and then the stochastic completeness property (\ref{heatkernel:stochasticcompleteness}), we have
   \begin{align*}
       \int_M e^{-\frac{d(x_0, z)^2}{36C_u t}} \, d \mu_g(z) \leq C_l(18 C_u t)^{\frac{n}{2}} \int H(x_0, z, 18 C_u t) \, \mu_g(z) =  C_l(18 C_u t)^{\frac{n}{2}},
   \end{align*} so we deduce that
   \begin{align}\label{Morreyheatestimates:II}
       \text{II} \leq R^{n-\lambda}C_nV^2(18C_u t)^{\frac{n}{2}}C_l \|f\|_{s_1, \lambda}^{s_1}.
   \end{align} Combining the bound (\ref{Morreyheatestimates:I}) for \(\text{I}\) and the bound (\ref{Morreyheatestimates:II}) for \(\text{II}\), we obtain (\ref{Morreyheatestimates:case2step1}). \par 
   We next obtain the bound (\ref{Morreyheatestimates:estimate}) in the proposition statement from (\ref{Morreyheatestimates:case2step1}). First note that the RHS of (\ref{Morreyheatestimates:case2step1}) is finite for \(s_1 \in [1, \infty)\) such that \(f \in M^{s_1, \lambda}\). By (\ref{preliminaries:Morreyembedding}), \(M^{s_1, \lambda} \subset M^{1, \lambda}\), so (\ref{Morreyheatestimates:case2step1}) and (\ref{heatkernel:stochasticcompleteness}) imply that for a.e. \(x \in B_R(x_0)\), \(f(y)\) is integrable with respect to the probability measure \(H(x, y, t) \mu_g(y)\). Thus, by Jensen's inequality
		\begin{align*}
		\int_{B_R(x_0)} |H_t(f)|^{s_1} \leq \int_{B_R(x_0)} \int_M H(x, y, t) |f(y)|^{s_1} \, \mu_g(y) \leq C R^{n-\lambda} \|f\|_{s_1, \lambda}^{s_1}.
		\end{align*}
		Since \(x_0\) and \(R \leq R_0\) were arbitrary, we obtain (\ref{Morreyheatestimates:estimate}). This concludes Case ii. \par 
        \emph{Case iii: \(s_1 \in [1, \infty), s_2 = \infty\).}
		This case can be deduced from \cite[Lemma 4.2]{semilinearivpriem} as follows. First suppose \(0 < t \leq R_0^2\), and recall our assumption from item (\ref{Morreyheatestimates:rhoinfinity}). In the setup of \cite[Lemma 4.2]{semilinearivpriem}, we take the measure \(\mu(y)\) to be \(|f(y)| \, \mu_g(y)\). Then for some constant \(C > 0\), depending only on \(M\),
        \begin{align*}
            |H_t(f)(x)| \leq \int_M H(x, y, t) \, \mu(y)\leq C t^{-\frac{n}{2}} \sup_{z \in M} \int_{B_{\sqrt{t}}(z)} |f(y)| \, \mu_g(y).
        \end{align*} By H{\"o}lder's inequality and (\ref{preliminaries:volume}), this last expression is
        \begin{align*}
            &\leq C t^{-\frac{n}{2} + \frac{n-\lambda}{2s_1}} \sup_{z \in M} \left(t^{\frac{1}{2}(\lambda-n)}\int_{B_{\sqrt{t}}(z)} |f|^{s_1}\right)^{\frac{1}{s_1}} \left(Vt^{\frac{n}{2}} \right)^{\frac{s_1-1}{s_1}} \\
            &\leq  CV t^{-\frac{\lambda}{2s_1}}\|f\|_{s_1, \lambda}.
        \end{align*} Thus, Case iii holds for \(0 < t \leq R_0^2\). \par
        For \(R_0^2 < t \leq 1\), we do the following. By the semigroup property \cite[(7.51)]{grigoryanheatkernelbook} of the heat kernel and Fubini-Tonelli,
        \begin{align*}
            \|H_t(f)\|_\infty &= \left\|H_{R_0^2}(H_{t - R_0^2}(f))\right\|_\infty \\
            &\leq C V(R_0^2)^{-\frac{\lambda}{2s_1}}\left\|H_{t - R_0^2}(f)\right\|_{s_1, \lambda}.
        \end{align*} Since \(t \leq 1\), we may increase \(C\) so that by Case ii this last expression is 
        \begin{align*}
            \leq C t^{-\frac{\lambda}{2s_1}}\|f\|_{s_1, \lambda}.
        \end{align*} Therefore, the proposition holds for Case iii. \par 
        \emph{Case iv: \(1 \leq s_1 < s_2 < \infty\).} This case follows by interpolating Cases ii and iii as follows:
        \begin{align*}
            \|H_t(f)\|_{s_2, \lambda} &\leq \|H_t(f)\|_\infty^{1 - \frac{s_1}{s_2}}\|H_t(f)\|_{s_1, \lambda}^{\frac{s_1}{s_2}} \\
            &\leq \left(C_{(\ref{Morreyheatestimates:estimate})}t^{-\frac{\lambda}{2s_1}}\|f\|_{s_1, \lambda}\right)^{1- \frac{s_1}{s_2}}\left(C_{(\ref{Morreyheatestimates:estimate})} \|f\|_{s_1, \lambda}\right)^{\frac{s_1}{s_2}} \\
            &= C_{(\ref{Morreyheatestimates:estimate})} t^{-\frac{\lambda}{2}\left(\frac{1}{s_1} - \frac{1}{s_2}\right)} \|f\|_{s_1, \lambda},
        \end{align*} as desired.
	\end{proof}
    Now that we have the preceding Morrey space estimates, we can straightforwardly adapt Souplet's result \cite[Prop. 6.1]{semilinearmorrey}, which is the Euclidean analog of Theorem \ref{thm:Morreysupbound}, to the Riemannian setting.
	\begin{proof}[Proof of Theorem \ref{thm:Morreysupbound}]
       We first prove (\ref{Morreysupbound:criticalestimate}) when \(T \leq 1\).  Before proceeding with the estimate, we first remove the linear term in the RHS of (\ref{diffineq}), i.e., defining
       \begin{align*}
           v(t) := e^{-tB} u(t),
       \end{align*} we show that for \(0 \leq t' < t < T\),
       \begin{align}\label{Morreysupbound:subsolutionwithoutlinearterm}
           v(t) \leq H_{t-t'}(v(t')) + C_{(\ref{Morreysupbound:integratingfactorconstant})}\int_{t'}^t H_{t-s}(v^p(s)) \, ds,
       \end{align} where
\begin{align}\label{Morreysupbound:integratingfactorconstant}
      C_{(\ref{Morreysupbound:integratingfactorconstant})} := Ae^{pB}.
  \end{align} To do this under our regularity assumptions on \(u\), we proceed as follows. 
   For \(f : (t', t) \times M \to \R\), denote
\begin{align*}
    I_{\ell} (f) := \int_{t'}^t \int_{t'}^{s_1} \cdots \int_{t'}^{s_{\ell}} f(s_{\ell+1}) \, ds_{\ell+1} \, \dots \, ds_1.
\end{align*} We claim that for \(0 \leq t' < t < T\) and \(\ell \geq 0\),
  \begin{align}\label{Morreysupbound:integratingfactorinductionclaim}
      &H_{t-t'}(u(t')) + \int_{t'}^t H_{t-s}(Au^p(s) + Bu(s)) \, ds  \nonumber \\
      \leq &\left(\sum_{i = 0}^{\ell} \frac{((t-t')B)^i}{i!}\right)\left(H_{t-t'}(u(t')) + \int_{t'}^t H_{t-s}(Au^p(s)) \, ds\right) + B^{\ell+1} I_{\ell} (H_{t-\cdot}(u(\cdot))).
  \end{align} The base case \(\ell = 0\) is clear, as it is just equality. Suppose the claim holds for \(\ell = m\). By (\ref{weaksolution:subsolution}) and positivity of the heat kernel,
  \begin{align*}
      I_m(H_{t-\cdot}(u(\cdot)))  \leq I_m\left((H_{t-\cdot}\left((H_{\cdot-t'}(u(t')) + \int_{t'}^{\cdot} H_{\cdot - s}(Au^p(s) + Bu(s)) \, ds\right)\right).
  \end{align*} By the semigroup property \cite[(7.51)]{grigoryanheatkernelbook} of the heat kernel,
  \begin{align*}
      I_m(H_{t-\cdot}(H_{\cdot-t'}(u(t')))) = H_{t-t'}(u(t')) I_m(1) = \frac{(t-t')^{m+1}}{(m+1)!} H_{t-t'}(u(t')),
  \end{align*} and similarly
  \begin{align*}
      I_m\left(H_{t-\cdot}\left(\int_{t'}^{\cdot} H_{\cdot - s}(Au^p(s) + Bu(s)) \, ds\right)\right) &= I_m\left(\int_{t'}^{\cdot} H_{t - s}(Au^p(s)) \, ds\right) + BI_{m+1}(H_{t-\cdot}(u(\cdot))) \\
      &\leq \frac{(t-t')^{m+1}}{(m+1)!}\int_{t'}^t H_{t-s}(Au^p(s)) \, ds + BI_{m+1}(H_{t-\cdot}(u(\cdot)).
  \end{align*}
   Thus, (\ref{Morreysupbound:integratingfactorinductionclaim}) holds for \(\ell = m+1\), and hence for all \(\ell\) by induction. Since \(u \in C_{\text{loc}}^0([0, T),M^{q, \lambda})\), \(B^{\ell+1} I_{\ell} (H_{t-\cdot}(u(\cdot)))\) goes to zero in \(M^{q, \lambda}\) as \(\ell \to \infty\). Hence, we have a.e.
  \begin{align*}
      u(t) \leq e^{(t-t')B}\left(H_{t-t'}(u(t')) + \int_{t'}^t H_{t-s}(Au^p(s)) \, ds\right).
  \end{align*} Multiplying by \(e^{-tB}\) and using that \(u^p(s) \leq e^{pB}(e^{-Bs}u(s))^p\) since \(T \leq 1\) yield (\ref{Morreysupbound:subsolutionwithoutlinearterm}).
  \par Now that we have removed the linear term in (\ref{weaksolution:subsolution}), the proof of (\ref{Morreysupbound:criticalestimate}) when \(T \leq 1\) follows essentially as in \cite[Prop. 20.25]{superlinearbook} using Prop. \ref{prop:Morreyheatestimates}; we repeat the proof here for clarity.
      Setting
        \begin{align*}
            \beta := \frac{\lambda}{2}\left(\frac{1}{q}-\frac{1}{r}\right) = \frac{1}{p-1}\left(1 - \frac{q}{r}\right),
        \end{align*} we have \(\beta < \frac{1}{p}\) in view of Def. \ref{defn:weaksolution}. Define
        \begin{align*}
            \tau_1 := \sup\{t \in (0, T) \mid s^\beta\|v(s)\|_{r, \lambda} \leq 2C_{(\ref{Morreyheatestimates:estimate})} \delta \ \ \forall \ s \in (0, t)\}.
        \end{align*}  We first show that \(\tau_1 = T\). Note that \(\tau_1 > 0\) since \(s^\beta\|v(s)\|_{r, \lambda} \to 0\) as \(s \searrow 0\) per Def. \ref{defn:weaksolution}. Since \(T \leq 1\), and since \(\frac{r}{p} \geq 1\) by Def. \ref{defn:weaksolution}, we have by Prop. \ref{prop:Morreyheatestimates} that for \(t \in (0, \tau_1)\) and \(s \in (0, t)\)
        \begin{align*}
            \|H_{t-s}(v^p(s))\|_{r,\lambda} &\leq C_{(\ref{Morreyheatestimates:estimate})}(t-s)^{-\frac{\lambda(p-1)}{2r}}\|v^p(s)\|_{\frac{r}{p}, \lambda} \\
            &= C_{(\ref{Morreyheatestimates:estimate})}(t-s)^{-\frac{q}{r}}\|v(s)\|_{r, \lambda}^p \\
            &\leq 2^pC_{(\ref{Morreyheatestimates:estimate})}^{p+1}(t-s)^{-\frac{q}{r}}s^{-\beta p}\delta^p.
        \end{align*} Since by our hypotheses \(q/r, \beta p < 1\), and since \(\beta+1 = \beta p +\frac{q}{r}\), we have for some constant \(\sigma_1 > 0\), depending only on \(p, q,\) and \(r\), that
        \begin{align*}
            t^\beta\int_0^t (t-s)^{-\frac{q}{r}}s^{-\beta p} \, ds = \sigma_1 < \infty,
        \end{align*} independent of \(t\). Therefore, for \(t \in (0, \tau_1)\), (\ref{Morreysupbound:subsolutionwithoutlinearterm}) with \(t' = 0\) implies
        \begin{align*}
            t^\beta \|v(t)\|_{r, \lambda} &\leq t^\beta\|H_t(v(0))\|_{r, \lambda} + t^\beta C_{(\ref{Morreysupbound:integratingfactorconstant})} \int_0^t \|H_{t-s}(v^p(s))\|_{r, \lambda} \, ds \\
            &\leq C_{(\ref{Morreyheatestimates:estimate})}\|v(0)\|_{q, \lambda} + C_{(\ref{Morreysupbound:integratingfactorconstant})}\sigma_12^pC_{(\ref{Morreyheatestimates:estimate})}^{p+1}\delta^p \\
            &\leq C_{(\ref{Morreyheatestimates:estimate})}\delta + C_{(\ref{Morreysupbound:integratingfactorconstant})}\sigma_12^pC_{(\ref{Morreyheatestimates:estimate})}^{p+1}\delta^p. 
        \end{align*} Hence, if
        \begin{align*}
            \delta_0 < \min\left\{1, \left(C_{(\ref{Morreysupbound:integratingfactorconstant})}\sigma_12^{p+1}C_{(\ref{Morreyheatestimates:estimate})}^p\right)^{-\frac{1}{p-1}}\right\} =: \delta_2,
        \end{align*} we have
        \begin{align*}
            t^\beta\|v(t)\|_{r, \lambda} < \frac{3}{2}C_{(\ref{Morreyheatestimates:estimate})}\delta
        \end{align*} for \(t \in (0, \tau_1)\). Therefore, \(\tau_1\) must in fact equal \(T\) by continuity. \par 
        Next set
        \begin{align}\label{Morreysupbound:vinftyconst}
            C_{(\ref{Morreysupbound:vinftyconst})} := 2^{\frac{\lambda}{2r}+ \beta + 2}C_{(\ref{Morreyheatestimates:estimate})}^2,
        \end{align} and define
        \begin{align*}
            \tau_2 := \sup\left\{t \in (0, T) \mid s^{\frac{1}{p-1}} \|v(s)\|_\infty \leq C_{(\ref{Morreysupbound:vinftyconst})} \delta \ \forall \ s \in (0, t)\right\},
        \end{align*} where again \(\tau_2 > 0\) in view of Def. \ref{defn:weaksolution}. We will show that \(\tau_2 = T\), which implies (\ref{Morreysupbound:criticalestimate}) when \(T \leq 1\). By (\ref{Morreysupbound:subsolutionwithoutlinearterm}) with \(t' = \frac{t}{2}\), and by definition of \(\tau_1\), we have for \(t \in (0, \tau_2)\)
        \begin{align*}
            t^{\frac{1}{p-1}}\|v(t)\|_\infty &\leq t^{\frac{1}{p-1}} \left\|H_{t/2}(v(t/2))\right\|_\infty + C_{(\ref{Morreysupbound:integratingfactorconstant})}t^{\frac{1}{p-1}}\int_{\frac{t}{2}}^t \|v(s)\|_\infty^p \, ds \\
            &\leq 2^{\frac{\lambda}{2r}}C_{(\ref{Morreyheatestimates:estimate})}t^{\frac{1}{p-1} - \frac{\lambda}{2r}}\|v(t/2)\|_{r, \lambda} + 2^{\frac{1}{p-1}}C_{(\ref{Morreysupbound:integratingfactorconstant})}C_{(\ref{Morreysupbound:vinftyconst})}^p\delta^p \\
            &\leq 2^{\frac{\lambda}{2r}+ \beta + 1}C_{(\ref{Morreyheatestimates:estimate})}^2\delta + 2^{\frac{1}{p-1}}C_{(\ref{Morreysupbound:integratingfactorconstant})}C_{(\ref{Morreysupbound:vinftyconst})}^p\delta^p.
        \end{align*}
        Hence, if
        \begin{align*}
            \delta_0 < \min\left\{\left(4C_{(\ref{Morreysupbound:integratingfactorconstant})}2^{\frac{1}{p-1}}C_{(\ref{Morreysupbound:vinftyconst})}^{p-1}\right)^{-\frac{1}{p-1}}, \delta_2\right\} =: \delta_3,
        \end{align*} then 
        \begin{align*}
            t^{\frac{1}{p-1}}\|v(t)\|_\infty < \frac{3}{4}C_{(\ref{Morreysupbound:vinftyconst})}\delta,
        \end{align*} so \(\tau_2\) must in fact equal \(T\) by continuity. Therefore, by definition of \(v\), and since \(\max\{q/r, \beta, T\} \leq 1 < p - \frac{2}{n}\), we obtain (\ref{Morreysupbound:criticalestimate}) with \(C_{(\ref{Morreysupbound:criticalestimate})}\) given by
        \begin{align}\label{Morreysupbound:criticalconst}
            C_{(\ref{Morreysupbound:criticalconst})} = e^B2^{\frac{n}{2}+3}C_{(\ref{Morreyheatestimates:estimate})}^2.
        \end{align} This completes the proof of (\ref{Morreysupbound:criticalestimate}) when \(T \leq 1\). \par
        We next show that the \(M^{q, \lambda}\) norm of \(u\) remains small for \(t \in (0, \min\{T, 1\})\). The argument is essentially the same as in Step 2 in the proof of inequality (8.6) in Prop. 8.1 of \cite{semilinearmorrey}. Namely, by definition of \(\tau_1\), which we showed was equal to \(T\), we have
        \begin{align*}
            \|v(t)\|_{q, \lambda} &\leq \|H_t(v(0))\|_{q, \lambda} + C_{(\ref{Morreysupbound:integratingfactorconstant})} \int_0^t \|H_{t-s}(v^p(s))\|_{q, \lambda} \, ds \\
            &\leq C_{(\ref{Morreyheatestimates:estimate})}\|v(0)\|_{q, \lambda} + C_{(\ref{Morreysupbound:integratingfactorconstant})} C_{(\ref{Morreyheatestimates:estimate})} \int_0^t (t-s)^{-\frac{\lambda}{2}\left(\frac{p}{r}-\frac{1}{q}\right)} \|v^p(s)\|_{\frac{r}{p}, \lambda} \, ds \\
            &\leq C_{(\ref{Morreyheatestimates:estimate})}\delta + C_{(\ref{Morreysupbound:integratingfactorconstant})} C_{(\ref{Morreyheatestimates:estimate})}^{p+1}2^p\delta^p \int_0^t (t-s)^{-\frac{\lambda}{2}\left(\frac{p}{r}-\frac{1}{q}\right)} s^{-\beta p} \, ds.
        \end{align*} Note that
        \begin{align*}
            \frac{\lambda}{2}\left(\frac{p}{r}-\frac{1}{q}\right) + \beta p = 1.
        \end{align*} Thus for some constant \(\sigma_2 > 0\), depending only on \(p, q\), and \(r\),
        \begin{align*}
            \int_0^t (t-s)^{-\frac{\lambda}{2}\left(\frac{p}{r}-\frac{1}{q}\right)} s^{-\beta p} \, ds = \sigma_2 <\infty,
        \end{align*} independent of \(t\). Hence, if we further have
        \begin{align*}
            \delta_0 < \min\left\{\left(2^{p+1}\sigma_2C_{(\ref{Morreysupbound:integratingfactorconstant})}C_{(\ref{Morreyheatestimates:estimate})}^p\right)^{-\frac{1}{p-1}}, \delta_3\right\} =: \delta_4,
        \end{align*} then for \(t \in (0, T)\),
        \begin{align}\label{Morreysupbound:Morreystayssmall}
            \|u(t)\|_{M^{q, \lambda}} < C_{(\ref{Morreysupbound:Morreystayssmallconst})}\delta,
                \end{align} where 
        \begin{align}\label{Morreysupbound:Morreystayssmallconst}
            C_{(\ref{Morreysupbound:Morreystayssmallconst})} := 2e^BC_{(\ref{Morreyheatestimates:estimate})}.
        \end{align} Hence, the \(M^{q, \lambda}\) norm of \(u\) remains small for a short time. \par
        We now prove (\ref{Morreysupbound:criticalestimate}) when \(T > 1\). Set
        \begin{align}\label{Morreysupbound:longconst}
            C_{(\ref{Morreysupbound:longconst})} := (C_{(\ref{Morreysupbound:criticalconst})}+1)^{q_c}C_{(\ref{Morreysupbound:Morreystayssmallconst})},
        \end{align} and note that \(C_{(\ref{Morreysupbound:Morreystayssmallconst})} \geq 1\) since \(C_{(\ref{Morreyheatestimates:estimate})} \geq 1\), making \(C_{(\ref{Morreysupbound:longconst})} \geq 1\). Take \begin{align*}
            \delta_0 < C_{(\ref{Morreysupbound:longconst})}^{-1} \delta_4 =: \delta_5.
        \end{align*} In view of (\ref{Morreysupbound:Morreystayssmall}), and since \(C_{(\ref{Morreysupbound:Morreystayssmallconst})} \delta < \delta_3\) by definition of \(\delta_5\), we may apply the argument from the first part of the proof, i.e., the \(T \leq 1\) case of (\ref{Morreysupbound:criticalestimate}), on the interval \([t-1, t]\) for \(t \in (1, \min\{2, T\})\) to obtain
        \begin{align*}
            \|u(t)\|_\infty \leq C_{(\ref{Morreysupbound:criticalconst})}C_{(\ref{Morreysupbound:Morreystayssmallconst})}\delta.
        \end{align*} Thus, (\ref{Morreysupbound:criticalestimate}) holds on \((0, \min\{2, T\})\) with \(C_{(\ref{Morreysupbound:criticalestimate})} = C_{(\ref{Morreysupbound:longconst})}\). \par 
        Next, if \(T > 2\), we define
        \begin{align}
            \tau_3 := \sup\{t \in (2, T) \mid \|u(s)\|_\infty \leq C_{(\ref{Morreysupbound:longconst})}\delta \ \forall \ s \in (1, t)\},
        \end{align} where \(\tau_3 > 2\) by continuity. We will show that \(\tau_3 = T\), which implies (\ref{Morreysupbound:criticalestimate}) when \(T > 2\). 
         For \(t \in (1, \tau_3)\), we have by (\ref{Morreysupbound:criticalinitialhypothesis}) that
        \begin{align*}
            \sup_{x \in M} \|u(t)\|_{L^{q_c}(B_1(x))} &\leq \sup_{x \in M} \|u(t)\|_{L^{\infty}(B_1(x))}^{1-\frac{s}{q_c}}\|u(t)\|_{L^s(B_1(x))}^{\frac{s}{q_c}} \\
            &< C_{(\ref{Morreysupbound:longconst})}^{1-\frac{s}{q_c}}\delta.
        \end{align*} Let \(\delta_3'\) denote the choice of \(\delta_3\) for \(q = q_c\), and take
        \begin{align*}
            \delta_0 < \min\{C_{(\ref{Morreysupbound:longconst})}^{-1}\delta_3', \delta_5\} =: \delta_6.
        \end{align*} Since \(C_{(\ref{Morreysupbound:longconst})} \geq 1\), we therefore have
        \begin{align*}
            \sup_{x \in M} \|u(t)\|_{L^{q_c}(B_1(x))} < \delta_3',
        \end{align*} so we may apply the argument from the first part of the proof, with \(q = q_c\), on the interval \([t - 1, t]\) for \(t \in (2, \tau_3)\), yielding
        \begin{align*}
            \|u(t)\|_\infty \leq C_{(\ref{Morreysupbound:criticalconst})}C_{(\ref{Morreysupbound:longconst})}^{1-\frac{s}{q_c}}\delta.
        \end{align*} By construction,
        \begin{align*}
            C_{(\ref{Morreysupbound:criticalconst})}C_{(\ref{Morreysupbound:longconst})}^{1-\frac{s}{q_c}} < C_{(\ref{Morreysupbound:longconst})},
        \end{align*} so \(\tau_3 = T\) by continuity and we take 
        \begin{align*}
            C_{(\ref{Morreysupbound:criticalestimate})} = C_{(\ref{Morreysupbound:longconst})}.
        \end{align*} This concludes the proof of (\ref{Morreysupbound:criticalestimate}). \par 
        Finally, we prove (\ref{Morreysupbound:improvedestimate}). Recall that now \(u(0)\) additionally is in \(M^{q', \lambda'}\), where 
        \begin{align*}
            \alpha := \frac{\lambda'}{2q'} < \frac{1}{p-1}.
        \end{align*} In view of (\ref{Morreysupbound:criticalestimate}), we just need to show that \(\|u(t)\|_\infty \leq Ct^{-\alpha}\) for \(t \in (0, \min\{1, T\})\), for some suitable constant \(C > 0\), which we now do. \par 
        By (\ref{Morreysupbound:criticalestimate}) and (\ref{Morreysupbound:subsolutionwithoutlinearterm}), \(v\) is a solution of the differential inequality
        \begin{align*}
            \left(\frac{\partial}{\partial t} - \Delta\right) v \leq \delta't^{-1} v
        \end{align*} for \(t \in (0, \min\{1, T\})\), where
        \begin{align*}
            \delta' := C_{(\ref{Morreysupbound:integratingfactorconstant})} C_{(\ref{Morreysupbound:vinftyconst})}^{p-1}\delta^{p-1}.
        \end{align*}Define 
        \begin{align*}
             \nu &:= \alpha(p-1) < 1 \\
             T' &:= \min\left\{1, T, \left(C_{(\ref{Morreysupbound:integratingfactorconstant})}4^{p+1}C_{(\ref{Morreyheatestimates:estimate})}^{2p-1}K^{p-1} (1-\nu)^{-1}\right)^{\frac{-1}{1-\nu}}\right\} \\
            \tau_4 &:= \sup\left\{t \in (0, T') \mid \|v(s)\|_{q', \lambda'} \leq 2C_{(\ref{Morreyheatestimates:estimate})} K \ \forall \ s \in (0, t)\right\} \\
            \tau_5 &:= \sup\left\{t \in (0, T') \mid s^{\alpha}\|v(s)\|_\infty \leq 2^{\alpha + 2}C_{(\ref{Morreyheatestimates:estimate})}^2 K \ \forall \ s \in (0, t)\right\}.
        \end{align*} We first prove (\ref{Morreysupbound:improvedestimate}) on \((0, T')\). Note that \(\tau_4, \tau_5 > 0\) by the assumptions \(C_{(\ref{Morreyheatestimates:estimate})} \geq 1\) and (\ref{Morreysupbound:improvedauxi}).  Suppose \(\tau_5 < \tau_4\). Then for \(0 < t < \tau_5\)
        \begin{align*}
            \|v(t)\|_\infty &\leq \left\|H_{t/2}(v(t/2))\right\|_\infty + \delta'\int_{\frac{t}{2}}^t s^{-1}\|H_{t-s}(v(s))\|_\infty \, ds \\
             &\leq  C_{(\ref{Morreyheatestimates:estimate})}(t/2)^{-\frac{\lambda'}{2q'}} \|v(t/2)\|_{q', \lambda'} + \delta'\int_{\frac{t}{2}}^t s^{-1} \|v(s)\|_\infty \, ds \\
            &\leq 2^{\alpha+1} C_{(\ref{Morreyheatestimates:estimate})}^2 t^{-\alpha}K + \delta'2^{\alpha + 2}C_{(\ref{Morreyheatestimates:estimate})}^2 K\int_{\frac{t}{2}}^t s^{-1-\alpha} \, ds  \\
            &\leq t^{-\alpha}\left(2^{\alpha+1} C_{(\ref{Morreyheatestimates:estimate})}^2 K + \delta'2^{\alpha + 2}C_{(\ref{Morreyheatestimates:estimate})}^2K\alpha^{-1}(2^\alpha-1)\right).
        \end{align*} If
        \begin{align*}
            \delta_0 < \min\left\{\left(4C_{(\ref{Morreysupbound:integratingfactorconstant})}C_{(\ref{Morreysupbound:vinftyconst})}^{p-1}\alpha^{-1}(2^\alpha-1)\right)^{-\frac{1}{p-1}}, \delta_6\right\} =: \delta_7,
        \end{align*} then
        \begin{align*}
            t^\alpha\|v(t)\|_\infty \leq \frac{3}{2}2^{\alpha + 1}C_{(\ref{Morreyheatestimates:estimate})}^2 K < 2^{\alpha + 2}C_{(\ref{Morreyheatestimates:estimate})}^2 K, \ \ \ \ (\ast)
        \end{align*} so in fact \(\tau_5 \geq \tau_4\) by continuity. (Since \(p > 1 + \frac{2}{n}\), we have \(\alpha \leq \frac{n}{2}\), so the dependencies of \(\delta_0\) have not changed.) \par 
        Now, suppose \(\tau_4 < \tau_5\). Observe that on \((0, \tau_5)\), \(v\) is a solution of the differential inequality
        \begin{align*}
            \left(\frac{\partial}{\partial t} - \Delta\right) v \leq C_{(\ref{Morreysupbound:integratingfactorconstant})} (2^{\alpha + 2}C_{(\ref{Morreyheatestimates:estimate})}^2K)^{p-1}t^{-\nu} v.
        \end{align*} Then for \(0 < t < \tau_4\)
        \begin{align*}
            \|v(t)\|_{q', \lambda'} &\leq \|H_t(v(0))\|_{q', \lambda'} + C_{(\ref{Morreysupbound:integratingfactorconstant})}(2^{\alpha + 2}C_{(\ref{Morreyheatestimates:estimate})}^2K)^{p-1}\int_0^t s^{-\nu}\|H_t(v(s))\|_{q', \lambda'} \, ds \\
            &\leq C_{(\ref{Morreyheatestimates:estimate})} \|v(0)\|_{q', \lambda'} + C_{(\ref{Morreysupbound:integratingfactorconstant})}2^{\nu + 2(p-1)}C_{(\ref{Morreyheatestimates:estimate})}^{2p-1}K^{p-1}\int_0^t s^{-\nu}\|v(s)\|_{q', \lambda'} \,ds \\
            &\leq C_{(\ref{Morreyheatestimates:estimate})}K + C_{(\ref{Morreysupbound:integratingfactorconstant})}2^{2p}C_{(\ref{Morreyheatestimates:estimate})}^{2p}K^{p} (1-\nu)^{-1}t^{1-\nu}.
        \end{align*} Since \(t \leq T'\), we have
        \begin{align*}
            \|v(t)\|_{q', \lambda'} \leq \frac{3}{2}K < 2K, \ \ \ \ (\ast\ast)
        \end{align*} so in fact \(\tau_4 \geq \tau_5\) by continuity. Thus, \(\tau_4 = \tau_5\), and therefore, \(\tau_4 = \tau_5 = T'\) by strictness of the inequalities $(\ast)$ and $(\ast\ast).$ Thus, (\ref{Morreysupbound:improvedestimate}) holds on \((0, T')\). \par
        Lastly, we prove (\ref{Morreysupbound:improvedestimate}) for \(t \in (T', \min\{1, T\})\). By (\ref{Morreysupbound:criticalestimate}),
        \begin{align*}
            \|v(t)\|_\infty &\leq \|H_{t-T'}(v(T'))\|_\infty + C_{(\ref{Morreysupbound:integratingfactorconstant})} \int_{T'}^t \|H_{t-s}(v^p(s))\|_\infty \, ds \\
            &\leq \|v(T')\|_\infty + C_{(\ref{Morreysupbound:integratingfactorconstant})} \int_{T'}^t \|v(s)\|_\infty^p \, ds \\
            &\leq C_4KT'^{-\alpha} + C_{(\ref{Morreysupbound:integratingfactorconstant})} (C_{(\ref{Morreysupbound:vinftyconst})}\delta)^p\int_{T'}^t s^{-\frac{p}{p-1}} \, ds \\
            &\leq C_4KT'^{-\alpha} + C_{(\ref{Morreysupbound:integratingfactorconstant})} (p-1)\left(T'^{-\frac{1}{p-1}}-1\right),
        \end{align*} so for \(t \in (0, \min\{1, T\})\)
        \begin{align}
            \|u(t)\|_\infty \leq C_{(\ref{Morreysupbound:improvedconst})}t^{-\alpha},
        \end{align} where
        \begin{align}\label{Morreysupbound:improvedconst}
            C_{(\ref{Morreysupbound:improvedconst})} = e^B\left(C_4KT'^{-\alpha} + C_{(\ref{Morreysupbound:integratingfactorconstant})} (p-1)\left(T'^{-\frac{1}{p-1}}-1\right)\right).
        \end{align} The bound (\ref{Morreysupbound:improvedestimate}) now follows, including for \(T > 1\), in view of (\ref{Morreysupbound:criticalestimate}).
	\end{proof}

\section{Application to harmonic map flow}
We now use the preceding Morrey estimates to prove Theorem \ref{thm:regularballtheorem} and Corollary \ref{gapcor}. First we will establish that for a suitable class of solutions of 
harmonic map flow, the energy density is a solution of (\ref{diffineq}) (in the sense of Def. \ref{defn:weaksolution}). \par
Let \(M\) and \(N\) be closed Riemannian manifolds, with \(M\) having dimension \(n\). By Nash's embedding theorem, we fix an isometric embedding of \(N\) into some Euclidean space \(\R^k\). Sobolev maps between \(M\) and \(N\) are then defined to be Sobolev \(\R^k\)-valued functions on \(M\) taking value a.e. in \(N \subset \R^k\). Letting \(\Pi\) denote the second fundamental form of this embedding, the harmonic map flow equation is equivalent to
    \begin{align*}
        \left(\frac{\partial}{\partial t} - \Delta\right) w = \Pi(w)(\nabla w, \nabla w),
    \end{align*} where \(w(t)\) is a family of \(\R^k\)-valued functions on \(M\) whose image is in \(N\) a.e., and \(\nabla\) and \(\Delta\), respectively, denote the gradient and Laplacian of \(M\) on \(\R^k\)-valued functions. By work of Wang \cite{hmfbmo}, we know that given an initial map \(w_0 \in W^{1, n}(M, N)\), there exists a solution of harmonic map flow in the following sense. By abuse of notation, we let \(\Pi\) denote the smooth, compactly supported extension of the second fundamental form 
    as in \cite[pg. 11]{hmfbmo}. For \(T \in (0, \infty)\), consider the Banach space \(X_T\) of functions \(f : M \times [0, T] \to \R^k\) defined by the norm
   \begin{align*}
       ||| f|||_{X_T} := \sup_{0 \leq t \leq T} \|f\|_\infty + \sup_{0 < t \leq T} \sqrt{t} \|\nabla f\|_\infty + \sup_{x \in M, 0 < R \leq \sqrt{T}} \left(R^{-n} \int_{B_R(x) \times [0, R^2]} |\nabla f|^2 \right)^{\frac{1}{2}}.
   \end{align*} Wang showed that for some \(T > 0\), depending on \(w_0\) and which we may take to be less than \(R_0^2\), the operator 
   \begin{align*}
       \Phi(f)(t) := H_t(w_0) + \int_0^t H_{t-s}(\Pi(f(s))(\nabla f(s), \nabla f(s))) \, ds
   \end{align*} has a unique fixed point \(w\) in some small ball about \(H_t(w_0)\) in \(X_T\). Additionally, for \((x, t) \in M \times (0, T]\), \(w\) is smooth and takes value in \(N\). \par
   We now explain the following regularity properties of \(w\) near the initial time in Lemma \ref{lemma:checkinghmfisMorreysolution}. We follow the strategy of Brezis and Cazenave \cite[Proof of Theorem 1, pg. 284-286]{breziscazenavesemilinear}.
   First recall the following gradient estimates for heat kernel on \(M\), where \(f_0\) is an \(\R^k\)-valued function.
   \begin{lemma}\label{lemma:gradientestimates}
        Let \(0 < t \leq 1\) and \(1 \leq q \leq r \leq \infty\). Then, there exists \(C \geq 1\), depending only on the geometry of \(M\), such that
   \begin{align*}
       &\|\nabla H_t(f_0)\|_r \leq C t^{-\frac{n}{2}\left(\frac{1}{q} -\frac{1}{r}\right)} \cdot \begin{cases} t^{-\frac12}\|f_0\|_q & f_0 \in L^q\\
       \| \nabla f_0\|_q & f_0 \in W^{1,q}.
       \end{cases}
   \end{align*}
   \end{lemma}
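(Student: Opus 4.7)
The plan is to combine two classical estimates that hold under our bounded-geometry hypothesis on $M$: the Gaussian upper bound on the heat kernel gradient
\[
|\nabla_x H(x,y,t)| \leq C\, t^{-(n+1)/2}\exp\!\left(-\frac{d(x,y)^2}{Ct}\right), \qquad 0 < t \leq 1,
\]
which follows from the work of Cheng-Li-Yau together with Hamilton's derivative estimate, and the $L^q\to L^r$ smoothing of the heat semigroup,
\[
\|H_t g\|_r \leq C\, t^{-\frac{n}{2}\left(\frac{1}{q}-\frac{1}{r}\right)}\|g\|_q, \qquad 0 < t \leq 1,\ 1 \leq q\leq r\leq\infty,
\]
which follows from the Gaussian upper bound of Lemma~\ref{lemma:heatkernel} by Jensen's inequality interpolated with the $L^q$-contraction property of $H_t$ (a consequence of stochastic completeness~(\ref{heatkernel:stochasticcompleteness})). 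For the second case I would additionally invoke the Bakry-\'Emery commutation inequality $|\nabla H_t f|(x) \leq e^{Kt}\, H_t(|\nabla f|)(x)$, valid because bounded sectional curvatures imply $\mathrm{Ric}\geq -K$ for some $K \geq 0$.

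For the first case, I would first integrate the Gaussian gradient bound over dyadic annuli around $x$, applying the volume bound~(\ref{preliminaries:volume}) near $x$ and Bishop-Gromov comparison for larger radii, to obtain
\[
\|\nabla_x H(x,\cdot,t)\|_{L^s(M)} \leq C\, t^{-(n+1)/2 + n/(2s)},\qquad 0 < t \leq 1,\ s \in [1,\infty],
\]
uniformly in $x \in M$; by symmetry of the heat kernel the same bound holds with the $x$- and $y$-variables interchanged. When $r = \infty$, H\"older's inequality with $s = q'$ gives the conclusion directly. When $r < \infty$, I would factor $H_t = H_{t/2}\circ H_{t/2}$: Schur's test with the $s = 1$ kernel bound (in both variables) gives $\|\nabla H_{t/2}\, g\|_r \leq C\, t^{-1/2}\|g\|_r$, and composing with the smoothing estimate applied to $H_{t/2}f_0$ yields the first half of the claimed bound.

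For the second case, I would apply the Bakry-\'Emery estimate to each scalar component of $f_0:M\to\R^k$. Since $|\nabla f_0^i|\leq|\nabla f_0|$ and $H_t$ is positivity-preserving, combining the componentwise estimates via Cauchy-Schwarz yields the pointwise bound $|\nabla H_t f_0|(x)\leq \sqrt{k}\, e^{Kt}\, H_t(|\nabla f_0|)(x)$. Taking $L^r$ norms and applying the smoothing estimate to the scalar function $|\nabla f_0|$ produces the second half of the bound, with the factor $\sqrt{k}\, e^K$ absorbed into $C$.

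The main obstacle is the precise statement of the ingredients: neither the Gaussian gradient bound nor the Bakry-\'Emery inequality is quoted in the earlier lemmas, and both require some effort to derive from the bounded-geometry assumption alone — the first via parabolic gradient estimates of Hamilton/Souplet-Zhang type, the second via the Bochner formula and parabolic maximum principle. Once these are taken as known, the rest reduces to routine semigroup manipulation and kernel estimation.
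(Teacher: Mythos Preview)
Your proposal is correct but takes a genuinely different route from the paper, especially in the second estimate. For the first estimate, the paper does not integrate the Gaussian gradient bound over annuli or invoke Schur's test; instead it uses the pointwise comparison $|\nabla_x H(x,y,t)| \leq Ct^{-1/2}H(x,y,Ct)$, obtained by combining the Cheng--Li--Yau upper bound with the Cheeger--Yau lower bound (\ref{heatkernel:lowerbound}), and then simply appeals to the scalar $L^q\to L^r$ smoothing of $H_{Ct}$. For the second estimate the contrast is sharper: the paper never invokes the Bakry--\'Emery commutation inequality. It first proves the $q=r$ case by writing the evolution of $\nabla H_t(f_0)$ via the one-form connection heat kernel $H_t^1$ and Duhamel's formula, bounding the Ricci term as a perturbation and closing with a Gronwall-type absorption for small $t$; it then obtains the $L^1\to L^\infty$ case from Kato's inequality and parabolic Moser iteration, and finishes by interpolation. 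Your approach is shorter and more robust---Bakry--\'Emery works on any complete manifold with a Ricci lower bound, while the paper's argument explicitly uses compactness of $M$ to invoke $H_t^1$---but it imports a stronger off-the-shelf result. The paper's route is more self-contained within the PDE framework already set up and illustrates how the $W^{1,q}$ bound can be built from the one-form heat equation without the full Bakry--\'Emery machinery.
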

   \begin{proof}
       This is well known; see the appendix.
   \end{proof}
\begin{lemma}\label{lemma:checkinghmfisMorreysolution}
    We have \(w \in C^0([0, T], W^{1, n}) \cap C_{\text{loc}}^0((0, T], W^{1, \infty})\). Moreover, 
    \begin{align*}
        \lim_{t \searrow 0} \left(\|w(t) - w_0\|_{W^{1, n}} + t^{\frac{1}{6}}\|\nabla w(t)\|_{\frac{3n}{2}} + t^{\frac{1}{2}}\|\nabla w(t)\|_{\infty}\right) = 0.
    \end{align*} In particular, \(|\nabla w|\) is a solution of (\ref{diffineq}) with \(p = 3, q = 2\), and \(r = 3\) as in Def. \ref{defn:weaksolution}.
\end{lemma}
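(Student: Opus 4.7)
The plan is to extract from Wang's fixed-point construction both the regularity required by Definition~\ref{defn:weaksolution} and the initial-time decay of $\nabla w$, then combine the Bochner formula for harmonic map flow with Kato's inequality to produce the subsolution property (\ref{weaksolution:subsolution}).

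Smoothness on $M\times(0,T]$ is essentially encoded in the $X_T$ bound: $\Pi$ is smooth and compactly supported, $|||w|||_{X_T}<\infty$ places $F(s):=\Pi(w(s))(\nabla w(s),\nabla w(s))$ into $L^\infty_{\mathrm{loc}}((0,T],L^\infty)$, and standard parabolic bootstrapping applied to $(\partial_t-\Delta)w=F$ yields $w\in C^\infty(M\times(0,T])$, in particular the $C^0_{\mathrm{loc}}((0,T],W^{1,\infty})$ assertion. For the behavior at $t=0$ I would split $w(t)=H_t(w_0)+I(t)$ with $I(t):=\int_0^t H_{t-s}(F(s))\,ds$. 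The linear term satisfies $H_t(w_0)\to w_0$ in $W^{1,n}$ by strong continuity of the heat semigroup on $W^{1,n}(M)$, and the auxiliary limits $t^{1/2}\|\nabla H_t(w_0)\|_\infty\to 0$ and $t^{1/6}\|\nabla H_t(w_0)\|_{3n/2}\to 0$ follow from Lemma~\ref{lemma:gradientestimates} by a density argument: given $\varepsilon>0$, pick smooth $\tilde w_0$ with $\|\nabla(w_0-\tilde w_0)\|_n<\varepsilon$ and split the norm into $t^{1/2}\|\nabla\tilde w_0\|_\infty+C\|\nabla(w_0-\tilde w_0)\|_n$.

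For the Duhamel piece, I would bound $|F(s)|\leq C|\nabla w(s)|^2$, interpolate $\|\nabla w(s)\|_{2\rho}^{2}$ between $\|\nabla w\|_\infty\leq Cs^{-1/2}$ and $\|\nabla w\|_n\leq C$, and apply Lemma~\ref{lemma:gradientestimates} to $H_{t-s}(F(s))$. Choosing $\rho$ appropriately for each target norm produces a Beta-function integral $\int_0^t(t-s)^{-a}s^{-b}\,ds$ with $a+b<1$, so a positive power of $t$ remains, yielding the desired decay simultaneously in $W^{1,n}$, in $L^{3n/2}$ with the $t^{1/6}$ weight, and in $L^\infty$ with the $t^{1/2}$ weight. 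Once continuity in $W^{1,n}$ at $t=0$ is established, the Hölder-type embeddings $L^n\hookrightarrow M^{2,2}$ and $L^{3n/2}\hookrightarrow M^{3,2}$ translate the Lebesgue limits into the Morrey conditions required by Definition~\ref{defn:weaksolution} with $(p,q,r,\lambda)=(3,2,3,2)$; note $\lambda=2q/(p-1)=2$ and $\lambda(1/q-1/r)/2=1/6$.

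It remains to verify (\ref{weaksolution:subsolution}). The Bochner--Weitzenböck formula for harmonic map flow gives, pointwise on $M\times(0,T)$,
\[
  \left(\frac{\partial}{\partial t}-\Delta\right)|\nabla w|^2\leq -2|\nabla^2 w|^2+C_1|\nabla w|^4+C_2|\nabla w|^2,
\]
with $C_1,C_2$ depending on the curvatures of $M$ and $N$ and on $\Pi$. Applying Kato's inequality $|\nabla|\nabla w||^2\leq|\nabla^2 w|^2$, justified via the regularization $u_\varepsilon:=\sqrt{|\nabla w|^2+\varepsilon}$ and $\varepsilon\to 0$, yields
\[
  \left(\frac{\partial}{\partial t}-\Delta\right)|\nabla w|\leq A|\nabla w|^3+B|\nabla w|
\]
in the weak sense, with $A=C_1/2$, $B=C_2/2$. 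Convolving with the heat kernel on $[t',t]\subset(0,T)$ via Duhamel gives (\ref{weaksolution:subsolution}) for $0<t'<t$, and $t'\searrow 0$ extends the inequality to $t'=0$ by the $W^{1,n}$-continuity already established.

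The main obstacle is the limit $t^{1/2}\|\nabla w\|_\infty\to 0$: a crude Duhamel estimate using only the $X_T$ bounds produces $\sqrt t\|\nabla w\|_\infty\leq C<\infty$ but not decay. The extra smallness must come from the density argument on $w_0\in W^{1,n}$, propagated through the fixed-point equation by a careful choice of interpolation exponents in $|\nabla w|^2$ so that the Beta integral in $s$ carries a strictly positive power of $t$.
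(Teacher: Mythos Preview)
Your overall architecture matches the paper's, and the treatment of the linear piece $H_t(w_0)$ and of the subsolution property is fine. The gap is in the Duhamel estimate for the nonlinear piece. If you interpolate $\|\nabla w(s)\|_{2\rho}$ between $\|\nabla w\|_\infty\leq Cs^{-1/2}$ and $\|\nabla w\|_n\leq C$ and then apply Lemma~\ref{lemma:gradientestimates} with source exponent $\rho$ and target exponent $r$, the resulting exponents are $a=\tfrac{n}{2\rho}-\tfrac{n}{2r}+\tfrac12$ and $b=1-\tfrac{n}{2\rho}$, so $a+b=\tfrac32-\tfrac{n}{2r}$, independent of $\rho$. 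For $r=n$, $3n/2$, $\infty$ this gives $a+b=1$, $7/6$, $3/2$---never strictly less than $1$. No positive power of $t$ survives; you only recover boundedness of $t^{1/6}\|\nabla w\|_{3n/2}$ and $t^{1/2}\|\nabla w\|_\infty$, not decay. Your final paragraph correctly identifies the obstacle, but the proposed cure---choosing interpolation exponents more carefully---cannot work, since $a+b$ does not see $\rho$. There is also a circularity: you feed in $\|\nabla w(s)\|_n\leq C$, but this is part of the $W^{1,n}$ continuity at $t=0$ that you are trying to prove.

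The paper does not try to make $a+b<1$. Instead it works with the Picard iterates $w_{i+1}=\Phi(w_i)$ converging to $w$ in $X_T$, and proves for $\mu_i:=\sup_{0<s\leq t}s^{1/6}\|\nabla w_i(s)\|_{3n/2}$ the recursion $\mu_{i+1}\leq\mu_1+C\mu_i^2$: the Beta integral contributes exactly $t^{-1/6}$, which the weight cancels, while the quadratic nonlinearity contributes $\mu_i^2$. The density argument makes $\mu_1$ as small as desired by shrinking $t$, and then $\mu_i\leq 2\mu_1$ for all $i$ by induction; passing to the limit gives $t^{1/6}\|\nabla w\|_{3n/2}\to 0$. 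The $L^\infty$ and $W^{1,n}$ limits then follow by a single Duhamel estimate on $w$ itself, with the smallness coming from the factor $\sup s^{1/6}\|\nabla w\|_{3n/2}$ already controlled, not from a surplus power of $t$. The missing ingredient in your proposal is precisely this induction on iterates exploiting the quadratic structure.
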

\begin{proof}
   
   Since \(w(t)\) is smooth for \(t > 0\) and since \(M\) is closed, we have
   \begin{align*}
       w \in C_{\text{loc}}^0((0, T], W^{1, \infty}) \subset C_{\text{loc}}^0((0, T], W^{1, n}).
   \end{align*} Thus, we just show that the limit in the statement of this lemma is zero. We establish this in the following order:
   \begin{enumerate}
       \item \(\lim_{t \searrow 0} t^{\frac{1}{6}} \|\nabla w(t)\|_{\frac{3n}{2}} = 0\); 
       \item \(\lim_{t \searrow 0} t^{\frac{1}{2}} \|\nabla w(t)\|_\infty = 0\); 
       \item \(\lim_{t \searrow 0} \|w(t) - w_0\|_{W^{1, n}} = 0\).
   \end{enumerate}\par
   In the sequel, \(C\) denotes a positive constant depending on the geometries of \(M\) and \(N\) which may increase from line to line. \par
   \emph{Item 1: \(\lim_{t \searrow 0} t^{\frac{1}{6}} \|\nabla w(t)\|_{\frac{3n}{2}} = 0\).}
   Denote
   \begin{align*}
       w_1(t) := H_t(w_0).
   \end{align*} We first claim that
   \begin{align}\label{checkinghmfisMorreysolution:goestozero}
      \lim_{t \searrow 0} \left(t^{\frac{1}{6}} \|\nabla w_1(t)\|_{\frac{3n}{2}} + t^{\frac{1}{2}}\|\nabla w_1(t)\|_\infty\right) = 0. 
   \end{align} To see this, first note that since \(w_0 \in W^{1, n}\) and since \(M\) is closed, there exists a sequence \(\tilde{w}_i \in C^\infty(M, \R^k)\) converging to \(w_0\) in \(W^{1, n}(M, \R^k)\). Then by Lemma \ref{lemma:gradientestimates},
    \begin{align*}
        \|\nabla w_1(t)\|_{\frac{3n}{2}} &\leq \|\nabla w_1(t) - \nabla H_t(\tilde{w}_i)\|_{\frac{3n}{2}} + \|\nabla H_t(\tilde{w}_i)\|_{\frac{3n}{2}} \\
        &\leq C t^{-\frac{1}{6}}\|\nabla w_0 - \nabla \tilde{w}_i\|_n + C \|\nabla H_t(\tilde{w}_i)\|_\infty \\
        &\leq C t^{-\frac{1}{6}}\|\nabla w_0 - \nabla \tilde{w}_i\|_n + C \|\nabla \tilde{w}_i\|_\infty.
    \end{align*} We multiply by \(t^{\frac{1}{6}}\). Choosing \(i\) and then \(t\) appropriately, we deduce that \(\lim_{t \searrow 0} t^{\frac{1}{6}} \|\nabla w_1(t)\|_{\frac{3n}{2}} = 0\). An analogous estimation implies that \(\lim_{t \searrow 0} t^{\frac{1}{2}} \|\nabla w_1(t)\|_{\infty} = 0\). Thus, the claim (\ref{checkinghmfisMorreysolution:goestozero}) holds. \par
    We now use (\ref{checkinghmfisMorreysolution:goestozero}) to prove Item 1. By Wang's construction, \(w = \lim_{n \to \infty} \Phi(w_n)\) in \(X_T\), where \(w_1\) is as before and \(w_n = \Phi(w_{n-1})\).
    Set
    \begin{align}\label{checkinghmfisMorreysolution:duhamelconstant}
        C_{(\ref{checkinghmfisMorreysolution:duhamelconstant})} := \int_0^1 (1-\sigma)^{-\frac{5}{6}} \sigma^{-\frac{2}{3}} \, d\sigma < \infty.
    \end{align} 
    Recall that we have assumed \(T \leq 1\). Then for all \(i\) and each \(\tau \in (0, T]\), Lemma \ref{lemma:gradientestimates} yields
    \begin{align*}
        \|\nabla w_{i+1}(\tau)\|_{\frac{3n}{2}} - \|\nabla w_1(\tau)\|_{\frac{3n}{2}} &\leq  \int_0^\tau \|\nabla H_{\tau-s}( \Pi(w_i(s))(\nabla w_i(s), \nabla w_i(s)))\|_{\frac{3n}{2}} \, ds \\ 
        &\leq C\int_0^\tau (\tau-s)^{-\frac{5}{6}}\|\Pi(w_i(s))(\nabla w_i(s), \nabla w_i(s))\|_{\frac{3n}{4}} \, ds \\ 
        &\leq  C\int_0^\tau (\tau-s)^{-\frac{5}{6}}\|\nabla w_i(s)\|_{\frac{3n}{2}}^2 \, ds \\ 
        &\leq  C\left(\sup_{0 < s \leq \tau} s^{\frac{1}{6}} \|\nabla w_i(s)\|_{\frac{3n}{2}}\right)^2\int_0^\tau (\tau-s)^{-\frac{5}{6}}s^{-\frac{1}{3}} \, ds \\ 
        &\leq  C\left(\sup_{0 < s \leq \tau} s^{\frac{1}{6}} \|\nabla w_i(s)\|_{\frac{3n}{2}}\right)^2C_{(\ref{checkinghmfisMorreysolution:duhamelconstant})} \tau^{-\frac{1}{6}}.
    \end{align*} Hence,
    \begin{align*}
        \tau^{\frac{1}{6}} \|\nabla w_{i+1}(\tau)\|_{\frac{3n}{2}} \leq \sup_{0 < s \leq \tau} s^{\frac{1}{6}}\|\nabla w_1(s)\|_{\frac{3n}{2}} + CC_{(\ref{checkinghmfisMorreysolution:duhamelconstant})}\left(\sup_{0 < s \leq \tau} s^{\frac{1}{6}} \|\nabla w_i(s)\|_{\frac{3n}{2}}\right)^2.
    \end{align*} Fix \(t \in (0, T]\). The preceding inequality holds for all \(\tau \in (0, t]\), so we replace \(\sup_{0 < s \leq \tau}\) in the RHS by \(\sup_{0 < s \leq t}\) and then supremize the LHS over \(\tau \in (0, t]\) to obtain
    \begin{align}\label{checkinghmfisMorreysolution:item1inductioninequality}
        \sup_{0 < s \leq t}s^{\frac{1}{6}} \|\nabla w_{i+1}(s)\|_{\frac{3n}{2}} \leq \sup_{0 < s \leq t} s^{\frac{1}{6}}\|\nabla w_1(s)\|_{\frac{3n}{2}} + CC_{(\ref{checkinghmfisMorreysolution:duhamelconstant})}\left(\sup_{0 < s \leq t} s^{\frac{1}{6}} \|\nabla w_i(s)\|_{\frac{3n}{2}}\right)^2.
    \end{align} We now use (\ref{checkinghmfisMorreysolution:item1inductioninequality}) and induction to show that \(\sup_{0 < s \leq t} s^{\frac{1}{6}} \|\nabla w_i(s)\|_{\frac{3n}{2}}\) can be made small for all \(i\) if \(t\) is chosen small enough. \par 
    Let
    \begin{align*}
        \varepsilon \in \left(0, (4CC_{(\ref{checkinghmfisMorreysolution:duhamelconstant})})^{-1}\right).
    \end{align*}(\ref{checkinghmfisMorreysolution:goestozero}) implies there exists \(t > 0\) such that 
    \begin{align*}
        \sup_{0 < s \leq t} s^{\frac{1}{6}}\|\nabla w_1(s)\|_{\frac{3n}{2}} \leq \varepsilon.
    \end{align*} Denote
    \begin{align*}
        \mu_i := \sup_{0 < s \leq t} s^{\frac{1}{6}} \|\nabla w_i(s)\|_{\frac{3n}{2}}.
    \end{align*} By choice of \(t\), we have
    \begin{align*}
        \mu_1 \leq \varepsilon.
    \end{align*} Suppose that we have 
    \begin{align*}
        \mu_i \leq 2\varepsilon.
    \end{align*} Then by (\ref{checkinghmfisMorreysolution:item1inductioninequality})
    \begin{align*}
        \mu_{i+1} \leq \varepsilon + CC_{(\ref{checkinghmfisMorreysolution:duhamelconstant})}\mu_i^2 \leq \varepsilon + 4CC_{(\ref{checkinghmfisMorreysolution:duhamelconstant})}\varepsilon^2 \leq 2\varepsilon.
    \end{align*} Thus by induction, \(\mu_i \leq 2\varepsilon\) for all \(i\).  From the definition of the \(X_T\) norm and the convergence \(w_i \to w\) in \(X_T\), we have that \(\nabla w_i \to \nabla w\) in \(C_{\text{loc}}^0((0, T), L^{\infty})\). Hence, 
    \begin{align*}
        \sup_{0 < s \leq t} s^{\frac{1}{6}}\|\nabla w(s)\|_{\frac{3n}{2}}\leq 2\varepsilon,
    \end{align*} so Item 1 follows. \par
    \emph{Item 2: \(\lim_{t \searrow 0} t^{\frac{1}{2}}\|\nabla w(t)\|_\infty = 0\).}  Since \(w\) solves the fixed-point equation, we have by Lemma \ref{lemma:gradientestimates}
    \begin{align*}
        \|\nabla w(t)\|_\infty - \|\nabla w_1(t)\|_\infty  &= \|\nabla \Phi(w)(t)\|_\infty - \|\nabla w_1(t)\|_\infty \\
        &\leq C\int_0^t (t-s)^{-\frac{5}{6}} \|\Pi(w(s))(\nabla w(s), \nabla w(s)) \|_{\frac{3n}{2}} \, ds \\
        &\leq C\left(\sup_{0 < s \leq t} s^{\frac{1}{2}} \|\nabla w(s)\|_\infty\right)\left(\sup_{0 < s \leq t} s^{\frac{1}{6}} \|\nabla w(s)\|_{\frac{3n}{2}}\right)\int_0^t (t-s)^{-\frac{5}{6}} s^{-\frac{2}{3}} \, ds \\
        &\leq C\left(\sup_{0 < s \leq t} s^{\frac{1}{2}} \|\nabla w(s)\|_\infty\right)\left(\sup_{0 < s \leq t} s^{\frac{1}{6}} \|\nabla w(s)\|_{\frac{3n}{2}}\right)C_{(\ref{checkinghmfisMorreysolution:duhamelconstant})}t^{-\frac{1}{2}}. 
    \end{align*} We multiply by \(t^{\frac{1}{2}}\). By definition of the \(X_T\) norm, we know that \(\sup_{0 < s \leq T} s^{\frac{1}{2}}\|\nabla w(s)\|_\infty < \infty\). Then, since by Item 1 and (\ref{checkinghmfisMorreysolution:goestozero})
    \begin{align*}
        \lim_{t \searrow 0} \left(t^{\frac{1}{6}}\|\nabla w(t)\|_{\frac{3n}{2}} + t^{\frac{1}{2}}\|\nabla w_1(t)\|_\infty\right) = 0,
    \end{align*} Item 2 follows. \par
    \emph{Item 3: \(\lim_{t \searrow 0} \|w(t) - w_0\|_{W^{1, n}} = 0\).} Since \(w_0 \in W^{1, n}\), \(w_1(t) \to w_0\) in \(W^{1, n}(M, \R^k)\) as \(t \searrow 0\). Next, we estimate
    \begin{align*}
        \left\|\int_0^t H_{t-s}(\Pi(w)(\nabla w(s), \nabla w(s))) \, ds\right\|_n &\leq C\int_0^t (t-s)^{-\frac{1}{3}} \|\Pi(w)(\nabla w(s), \nabla w(s))\|_{\frac{3n}{4}} \, ds \\
        &\leq C \left(\sup_{0 < s \leq t} s^{\frac{1}{6}} \|\nabla w(s)\|_{\frac{3n}{2}}\right)^2 \int_0^t (t-s)^{-\frac{1}{3}}s^{-\frac{1}{3}} \, ds \\
        &\leq CC_{(\ref{checkinghmfisMorreysolution:duhamelconstant})} \left(\sup_{0 < s \leq t} s^{\frac{1}{6}} \|\nabla w(s)\|_{\frac{3n}{2}}\right)^2
    \end{align*} and
    \begin{align*}
        \left\|\int_0^t \nabla H_{t-s}(\Pi(w)(\nabla w(s), \nabla w(s))) \, ds\right\|_n &\leq \int_0^t C(t-s)^{-\frac{2}{3}} \|\Pi(w)(\nabla w(s), \nabla w(s))\|_{\frac{3n}{4}} \, ds \\
        &\leq CC_{(\ref{checkinghmfisMorreysolution:duhamelconstant})}  \left(\sup_{0 < s \leq t} s^{\frac{1}{6}} \|\nabla w(s)\|_{\frac{3n}{2}}\right)^2.
    \end{align*} Since \(w\) solves the fixed-point equation, we deduce that 
    \begin{align*}
        \|w(t) - w_0\|_{W^{1, n}} &= \|\Phi(w)(t) - w_0\|_{W^{1, n}} \\
        &\leq \|w_1(t) - w_0\|_{W^{1, n}} + \left\|\int_0^t  H_{t-s}(\Pi(w(s))(\nabla w(s), \nabla w(s))) \,ds \right\|_{W^{1, n}} \\
        &\to 0 \ \text{ as } \ t \searrow 0,
    \end{align*} as desired.
    \par
    We have thus shown that the limit in the statement of the lemma is zero. It remains to deduce that
    \begin{align*}
        u := |\nabla w|
    \end{align*}is a solution of (\ref{diffineq}), which we now explain. It follows from H{\"o}lder's inequality that
    \begin{align*}
        u \in C^0([0, T], M^{2, 2}) \cap  C_{\text{loc}}^0((0, T], L^\infty),
    \end{align*} and furthermore
    \begin{align*}
       \lim_{t \searrow 0} \left(t^{\frac{1}{6}}\|u\|_{M^{3, 2}} + t^{\frac{1}{2}}\|u\|_\infty\right) = 0. 
    \end{align*} Since \(w\) is smooth on \(M \times (0, T]\), the B{\"o}chner identity for harmonic map flow and the comparison principle yields for all \(0 < t' < t \leq T\) 
    \begin{align}\label{checkinghmfisMorreysolution:subsolution}
        u(t) \leq H_{t-t'}(u(t')) + \int_{t'}^t H_{t-s}(Au^3(s) + Bu(s)) \, ds.
    \end{align} Note that for fixed \(t > 0\), we have that as \(t' \searrow 0\)
    \begin{align*}
        \| H_{t-t'}(u(t')) - H_t(u(0))\|_n &= \|H_{t-t'}(u(t')-H_{t'}(u(0)))\|_n \\
        &\leq C\|u(t') - H_{t'}(u(0))\|_n \\
        &\leq C(\|u(t') - u(0)\|_n + \|u(0) - H_{t'}(u(0))\|_n) \\
        &\to 0
    \end{align*} and 
    \begin{align*}
        \left\|\int_0^{t'} \hspace{-0.4em} H_{t-s} (Au^3(s) + Bu(s)) \, ds\right\|_n &\leq CA\hspace{-0.4em}\int_0^{t'} \hspace{-0.4em} (t-s)^{-1}\|u^3(s)\|_{\frac{n}{3}} \, ds + CB\hspace{-0.4em}\int_0^{t'} \hspace{-0.4em}\|u(s)\|_n \, ds \\
        &\leq CA(\|u(0)\|_n+1)^3\ln\left(\frac{t}{t-t'}\right) + CB(\|u(0)\|_n+1)t' \\
        &\to 0.
    \end{align*} Therefore, (\ref{checkinghmfisMorreysolution:subsolution}) also holds with \(t' = 0\). Hence, we conclude that \(|\nabla w|\) is a solution of (\ref{diffineq}).
\end{proof}

We will also make use of the following standard lemma, which can be compared with \cite[Thm. 1.4]{bolingkelleherstreets}, as well as the earlier work \cite[Cor. 1.1]{chending}.
\begin{lemma}\label{lemma:regularballlemma}
    Suppose \(w : M \times [0, T) \to N\) is a classical solution of harmonic map flow between closed Riemannian manifolds \((M, g), (N, h)\). If the image of \(w(0)\) is contained in a regular ball \(B \subset N\) (see \cite{Harmonicmapsintoregballs} for a definition), then \(w\) extends to a global classical solution and converges exponentially in \(C^k\) for all \(k\) to a constant map as \(t \to \infty\). 
\end{lemma}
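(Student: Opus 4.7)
The plan is to exploit the defining property of a regular ball: by \cite{Harmonicmapsintoregballs}, there exists a smooth strictly convex function \(\phi\) on \(\bar B\) with \(\operatorname{Hess}_N\phi \geq c\,h\) for some \(c > 0\). Using the intrinsic form \(\partial_t w = \tau(w)\) of harmonic map flow and the chain-rule identity \(\Delta_g(\phi\circ w) = \operatorname{Hess}_N\phi(dw,dw) + d\phi(\tau(w))\), one obtains the pointwise inequality
\[
\left(\frac{\partial}{\partial t} - \Delta_g\right)(\phi\circ w) = -\operatorname{Hess}_N\phi(dw,dw) \leq -c|dw|^2.
\]
Thus \(\phi\circ w\) is a subsolution of the heat equation on the closed manifold \(M\), and the parabolic maximum principle shows that \(\max_M(\phi\circ w)(t)\) is nonincreasing. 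Since \(\max_M(\phi\circ w)(0) < \sup_B \phi\) by the hypothesis that the image of \(w(0)\) lies in \(B\) and \(M\) is compact, this confines \(w(t)\) to a fixed compact subset \(K\Subset B\) for every \(t\) in the interval of existence.

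Next I would use this image confinement to rule out blow-up. Standard parabolic regularity for harmonic map flow (cf.\ \cite{bolingkelleherstreets, chending}) shows that once the image lies in a compact subset of a regular ball, no singularity can form: any putative blow-up limit would be a non-constant harmonic map with image in \(\bar B\), which the maximum principle applied to \(\phi\) precludes. Hence \(w\) extends smoothly for all \(t\in[0,\infty)\). Integration of the pointwise inequality over \(M\) yields \(\frac{d}{dt}\int_M(\phi\circ w)\,\mu_g \leq -c\int_M|dw|^2\,\mu_g\), so the Dirichlet energy is integrable on \([0,\infty)\); a standard compactness argument along a sequence \(t_i\to\infty\) then extracts a harmonic limit \(w_\infty : M\to \bar B\), and applying the maximum principle to \(\phi\circ w_\infty\) forces \(w_\infty\equiv q\) for some \(q\in B\).

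Finally, for full exponential convergence in every \(C^k\), I would linearize about \(q\). Once \(w(t)\) is uniformly \(C^0\)-close to \(q\), we may write \(w=\exp_q(v)\) for a small section \(v:M\to T_qN\); the flow then reads \(\partial_t v - \Delta_g v = Q(v,\nabla v)\) for a smooth quadratic-type \(Q\), and the positive spectral gap of \(\Delta_g\) on the closed manifold \(M\) combined with a semigroup/contraction argument yields exponential \(L^2\) decay of \(v(t)\). Parabolic Schauder estimates bootstrapped on unit-length intervals \([t,t+1]\) then upgrade this to exponential \(C^k\) decay for every \(k\). The principal obstacle is promoting the subsequential convergence above to full convergence at an explicit exponential rate --- the classical \L{}ojasiewicz-type difficulty in geometric flows. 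In the regular-ball setting, however, the strict convexity of \(\phi\) supplies a direct comparison between \(\|w(t)-q\|_{L^2}\) and the dissipation \(\int_M|\tau(w)|^2\,\mu_g\), as detailed in the cited works, so no delicate \L{}ojasiewicz estimate is needed.
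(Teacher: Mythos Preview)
Your proposal is correct in outline and matches the paper's argument through the confinement step and subsequential convergence: the paper uses \(|w|^2\) in geodesic coordinates on \(B\) as its convex function (citing \cite{Harmonicmapsintoregballs}), invokes the maximum principle exactly as you do, and then cites \cite{HMFintoregballs} directly for global existence and the uniform bound \(\sup_{M\times[1,\infty)}|dw|<\infty\) rather than sketching a blow-up argument.

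The genuine difference is in the exponential convergence step. You linearize via \(w=\exp_q(v)\) and appeal to the spectral gap of \(\Delta_g\). The paper instead proves the inequality \(\int_M|dw|^2\leq C\int_M|\tau(w)|^2\) directly for large \(t\): writing \(\tau(w)=\sum_i d^\ast dw^i-\Gamma(w)(\mathrm{tr}_g(dw\otimes dw))\) in normal coordinates at \(q\), the Poincar\'e inequality gives \(\int|dw^i|^2\leq C\int|d^\ast dw^i|^2\), and the Christoffel term is absorbed because \(\|\Gamma(w(t))\|_\infty\to 0\). This last fact comes from the same maximum-principle monotonicity you established: once the image of \(w(t_j)\) lies in \(B_{j^{-1}}(q)\), it stays there for all \(t\geq t_j\). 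One then gets \(\frac{d}{dt}\int|dw|^2\leq -2C^{-1}\int|dw|^2\) immediately. This route avoids linearization altogether and is slightly more elementary; your approach is equally valid but requires a bit more care to absorb the quadratic term \(Q(v,\nabla v)\), which needs \(\|\nabla v\|_\infty\) small rather than just bounded.

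One small imprecision: your final sentence claims strict convexity of \(\phi\) compares \(\|w-q\|_{L^2}\) with \(\int|\tau(w)|^2\). That is not quite the relevant comparison; what is needed (and what the paper proves) is \(\int|dw|^2\lesssim\int|\tau(w)|^2\), and convexity alone does not give this---the extra ingredient is that the nonlinear part of \(\tau(w)\) becomes negligible as the image shrinks to \(q\).
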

\begin{proof}
   By compactness and continuity, \(w\) is contained in \(B\) for short time. Let \(|w|^2\) denote \(\delta_{ij}w^iw^j\) in some choice of geodesic coordinates on \(B\). From \cite[(2.22)]{Harmonicmapsintoregballs}, we see that \(\left(\frac{\partial}{\partial t} - \Delta\right)|w|^2 \leq 0\), so by the maximum principle we deduce that \(w\) must be contained in \(B\) as long as \(w\) is defined. Then, \cite[Theorem 2.2]{HMFintoregballs} implies that \(w\) must be defined for all time, and we have the bound
\begin{align}\label{regularballlemma:derivativebound}
    \sup_{M \times [1, \infty)} |dw| < \infty.
\end{align} Let \(t_j := j\). Then, \cite[Theorem 4.1]{HMFintoregballs} implies that for a subsequence (also labeled \(t_j\)), \(w( t_j)\) \(C^2\) converges to a harmonic map \(w_\infty\), which must be contained in \(B\). By  \cite[Theorem 1]{Harmonicmapsintoregballs}, \(w_\infty\) must be a constant map, whose image we denote by \(x_\infty\). \par
To show that in fact we have exponential convergence in time to \(w_\infty\), we proceed as follows. For \(\varepsilon > 0\) small enough, \(B' := B_\varepsilon(x_\infty)\) is a regular ball. We fix some choice of normal coordinates on \(B'\). We first show that there exists \(C > 0\) such that for \(t\) large enough
    \begin{align}\label{regularballlemma:tensionboundsenergy}
        \int |dw|^2 \leq C \int |\tau(w)|^2.
    \end{align} Here \(\tau(w)\) denotes the tension field, i.e., map Laplacian, of \(w\), and \(C\) may increase from line to line in the argmument.  Write \(w = w^i e_i\) in the coordinates on \(B'\), and set
    \begin{align*}
        c^i := \frac{1}{\text{Vol}(M)}
        \int_M w^i.
    \end{align*}In the coordinates on \(B'\)
    \begin{align*}
        C^{-1}\delta_{ij} \leq h_{ij} \leq C \delta_{ij}.
    \end{align*}We compute
    \begin{align*}
        \int |dw|^2 &\leq C \sum_i \int |dw^i|^2 \\
        &= C \sum_i \int \langle w^i - c^i, d^\ast d w^i\rangle \\
        &\leq C \sum_i \sqrt{\int |w^i - c^i|^2} \sqrt{\int |d^\ast d w^i|^2} \\
        &\leq C \sum_i \sqrt{\int |dw^i|^2} \sqrt{\int |d^\ast d w^i|^2}.
    \end{align*} Hence,
    \begin{align*}
        \int |dw|^2 \leq C 
        \sum_i \int |d^\ast dw^i|^2.
    \end{align*} Let \(\Gamma\) denote the Christoffel symbols of \(h\) on \(B'\). Then,
    \begin{align*}
        \sum_i d^\ast d w^i = \tau(w) - \Gamma(w)(\text{tr}_g(dw \otimes dw)).
    \end{align*} Passing to a subsequence of the \(t_j\), we have that the image of \(w(t_j)\) is contained in \(B_{j^{-1}}(x_\infty)\). The same is true for \(t \geq t_j\) by the maximum principle argument in the second line of this proof. Since the coordinates on \(B'\) are normal, we thus have \(\|\Gamma(w(t))\|_\infty \to 0\) as \(t \to \infty\). The bound (\ref{regularballlemma:tensionboundsenergy}) thus follows in view of (\ref{regularballlemma:derivativebound}). \par By (\ref{regularballlemma:tensionboundsenergy}), we have for \(t\) large enough
    \begin{align*}
        \frac{d}{dt} \int |dw|^2 = -2\int |\tau(w)|^2 \leq -2C^{-1} \int |dw|^2.
    \end{align*} Hence, the energy of \(w\) is eventually exponentially decaying, so standard estimates for harmonic map flow imply that \(w(t) \to w_\infty\) exponentially in \(C^k\) for all \(k\), as desired.
\end{proof}

\begin{proof}[Proof of Theorem \ref{thm:regularballtheorem}]
    Let \(T > 0\) be the maximal time of smooth existence of \(w\). It follows from Lemma \ref{lemma:checkinghmfisMorreysolution} and Theorem \ref{thm:Morreysupbound} that if \(\delta_1 > 0\) is small enough, \(\|dw(t)\|_\infty\) is bounded at each time \(t \in (0, \min\{T, 1\})\). But since a supremum bound on \(dw\) is enough to ensure smooth existence of harmonic map flow, we deduce that \(T \geq 1\). Moreover, we have the bound \(\|dw(1)\|_\infty \leq C_{(\ref{Morreysupbound:criticalestimate})}\delta\), so if \(\delta_1\) is taken small enough, depending on the geometries of \(M\) and \(N\), \(dw(1)\) will be small in \(L^\infty\) such that the image of \(w(1)\) is be contained in a regular ball. The theorem now follows from Lemma \ref{lemma:regularballlemma}.
\end{proof}

To prove Cor. \ref{gapcor}, we will use an approximation result for maps whose derivatives have small Morrey norm. This follows the strategy of Struwe \cite[Prop. 7.2]{struwehigherdimhmf}, which originates from Schoen-Uhlenbeck \cite[Proposition on pg. 267]{schoenuhlenbeckhmbdryreg}
\begin{lemma}\label{Morreyapproximation}
    Let \(w \in W^{1, a}(M, N)\), where \(1 \leq a < \infty\). There exists \(\varepsilon > 0\), depending on the geometries of \(M\) and \(N\), such that the following holds. If for some \(\rho_0 > 0\) we have
    \begin{align}\label{Morreyapproximation:assumption}
        \sup_{x \in M, 0 < R \leq \rho_0} R^{2-n} \int_{B_R(x)} |dw|^2 \leq \varepsilon^2, 
    \end{align} then there exists \(\lambda_0 > 0\), depending on \(\rho_0\) and the geometries of \(M\) and \(N\), and a family of smooth maps \(w_\lambda, \lambda \in (0, \lambda_0]\), from \(M\) to \(N\) such that \(w_\lambda \to w\) in \(W^{1, a}(M, N)\) as \(\lambda \searrow 0\).  Moreover, if \(w\) is continuous, then \(w_\lambda\) is homotopic to \(w\) for each \(\lambda \in (0, \lambda_0]\). Finally, we have \(\|dw_\lambda\|_{2, 2} \leq C_{M, N}\|dw\|_{2, 2}\).
\end{lemma}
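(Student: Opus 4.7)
I follow the approach of Schoen--Uhlenbeck and Struwe: smooth $w$ by ball-averaging in $\R^k$ (via an isometric Nash embedding), then project the result back to $N$ via the nearest-point retraction from a tubular neighborhood. Fix an isometric embedding $N \hookrightarrow \R^k$ with smooth retraction $\Pi : U \to N$ on a tubular neighborhood $U := \{y \in \R^k \mid \dist(y, N) < \eta\}$, where $\eta > 0$ depends only on the geometry of $N$. I view $w \in W^{1,a}(M, N)$ as an $\R^k$-valued Sobolev function taking values a.e.\ in $N$.

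For $\lambda \in (0, \lambda_0]$ with $\lambda_0 \leq \rho_0/4$, define the average
$$\tilde{w}_\lambda(x) := \mu_g(B_\lambda(x))^{-1}\int_{B_\lambda(x)} w \, d\mu_g,$$
followed by one additional standard mollification in a geodesic chart to produce a $C^\infty$ map $\tilde{w}_\lambda : M \to \R^k$. Since $M$ has bounded geometry, the Poincar\'e inequality holds uniformly on balls of radius $\leq \rho_0$, so together with (\ref{Morreyapproximation:assumption}) I obtain
$$\mu_g(B_\lambda(x))^{-1}\int_{B_\lambda(x)} |w - \tilde{w}_\lambda(x)|^2 \, d\mu_g \leq C\lambda^2 \mu_g(B_\lambda(x))^{-1}\int_{B_\lambda(x)} |dw|^2 \, d\mu_g \leq C'\varepsilon^2.$$
Since $w(y) \in N$ a.e., this forces $\dist(\tilde{w}_\lambda(x), N) \leq C''\varepsilon$ pointwise. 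Choosing $\varepsilon$ small enough (depending on $\eta$) that $\tilde{w}_\lambda(M) \subset U$, I set $w_\lambda := \Pi \circ \tilde{w}_\lambda$, which is smooth from $M$ to $N$.

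Convergence $w_\lambda \to w$ in $W^{1,a}(M, N)$ will follow from $\tilde{w}_\lambda \to w$ in $W^{1,a}(M, \R^k)$ (standard for bounded geometry) and smoothness of $\Pi$ with $d\Pi|_N = \mathrm{Id}$. For the Morrey bound, Poincar\'e provides the pointwise estimate $|d\tilde{w}_\lambda(x)|^2 \leq C\lambda^{-n}\int_{B_{2\lambda}(x)}|dw|^2 \, d\mu_g$; a Fubini swap on Morrey test scales $R \geq \lambda$, together with the trivial pointwise bound $|d\tilde{w}_\lambda|^2 \leq C'\lambda^{-2}\|dw\|_{2,2}^2$ for $R < \lambda$, gives $\|d\tilde{w}_\lambda\|_{2,2} \leq C_M\|dw\|_{2,2}$. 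The chain rule and $|d\Pi| \leq C_N$ on $U$ then yield $\|dw_\lambda\|_{2,2} \leq C_{M,N}\|dw\|_{2,2}$. Finally, if $w$ is continuous on the compact manifold $M$, then $\tilde{w}_\lambda \to w$ uniformly, so for small $\lambda$ the affine segment $(1-s)w(x) + s\tilde{w}_\lambda(x)$ lies in $U$ for all $(x,s) \in M \times [0,1]$, and $(x,s) \mapsto \Pi((1-s)w(x) + s\tilde{w}_\lambda(x))$ is a continuous homotopy from $w$ to $w_\lambda$ in $N$.

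The principal obstacle is controlling the commutator between the exterior derivative and the ball-averaging operator, which produces boundary contributions involving the mean curvature of geodesic spheres $\partial B_\lambda$. These must be absorbed into $C_M$ using the uniform sectional curvature bounds of $M$. The subsequent local mollification in a chart has to be arranged through a uniformly locally finite geodesic atlas (as in item (\ref{Morreyheatestimates:uniformcover}) of Prop.\ \ref{prop:Morreyheatestimates}) so that the transition between Riemannian and Euclidean volume contributes only geometry-dependent factors independent of $\lambda$. Once these constants are bookkept, the Morrey bound and $W^{1,a}$ convergence follow by routine estimation; no step requires genuinely new ideas beyond the Schoen--Uhlenbeck scheme.
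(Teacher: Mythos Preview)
Your proposal is correct and follows the same Schoen--Uhlenbeck scheme as the paper. The paper works from the outset with mollification in geodesic charts glued by a partition of unity (rather than intrinsic ball-averaging followed by a second mollification), which sidesteps the commutator between $d$ and the averaging operator that you flag as the principal obstacle; for the Morrey bound it rewrites $\tilde w_\lambda = w + \sum_j \psi_j \int (w(\cdot - y) - w)\eta_\lambda(y)\,dy$ and controls the correction via the fundamental theorem of calculus, in place of your pointwise-bound-plus-Fubini argument. One small correction: the pointwise estimate $|d\tilde w_\lambda(x)|^2 \leq C\lambda^{-n}\int_{B_{2\lambda}(x)}|dw|^2$ does not come from Poincar\'e but from $\nabla$ commuting with Euclidean convolution together with Jensen, which is exactly why working in charts is the cleaner route.
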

\begin{proof}
    By Nash's embedding theorem, we fix an isometric embedding of \(N\) into some \(\R^k\). Then, there is some \(\varepsilon_1 > 0\) such that the Euclidean \(\varepsilon_1\)-neighborhood \(U\) of \(N\) is a tubular neighborhood for which the nearest-neighbor projection \(\pi:B_{\varepsilon_1}(N) \to N\) is well defined and smooth. We first construct smooth approximations \(\tilde{w}_\lambda\) of \(w\) in \(W^{1, a}(M, \R^k)\) and show that they take value in \(U_{\varepsilon_1}\). \par
    Fix a finite cover \(\{B_{R_0}(p_i)\}\) of \(M\) by balls of radius \(R_0\), where \(R_0\) satisfies the conditions (1)-(4) from the proof of Prop. \ref{prop:Morreyheatestimates}. We also fix a smooth partition of unity \(\{\psi_i\}\) subordinate to the cover \(\{B_{R_0}(p_i)\}\), and for each \(i\) we fix \(p_i\)-centered geodesic coordinates
    \begin{align*}
        N_{p_i} : B_{6R_0}(0)\subset \R^n \to B_{6R_0}(p_i) \subset M.
    \end{align*} Let \(\eta\) be a standard mollifier on \(\R^n\), i.e., \(\eta \in C_c^\infty(B_1(0))\), \(\eta \geq 0\), \(\eta\) is radial, and \(\int_{\R^n} \eta = 1\). Denote \(\eta_\lambda := \lambda^{-n} \eta(\lambda^{-1} \cdot)\). For \(\lambda_0 \in (0, R_0)\), we may define the mollification for \(\lambda \in (0, \lambda_0]\)
    \begin{align*}
        \tilde{w}_{\lambda, i}(x) := \psi_i(x)\int_{B_{3R_0}(0)} w(N_{p_i}(N_{p_i}^{-1}(x)-y))\eta_\lambda(y) \, \mu_{\text{Euc}}(y).
    \end{align*}  Define \(\tilde{w}_\lambda := \sum_i \tilde{w}_{\lambda, i}\), which we note is smooth and furthermore is constant if \(w\) is constant. \par
    We first show that \(\tilde{w}_\lambda\) takes value in \(U_{\varepsilon_1}\). In the sequel, \(\lambda_0\) may decrease as need be, and \(C_{M, N}\), which is a positive constant depending on the geometries of \(M\) and \(N\), may increase as need be. By the uniform intersection property of the cover, the following holds. For each \(x \in M ,\) there exist \(J\) indices \(i_{x, 1}, \dots, i_{x, J}\), with \(J \leq C_n\), such that for all  \(R \in (0, R_0], \lambda \in (0, \lambda_0],\) and \(z \in B_R(x)\), we have \(\tilde{w}_\lambda(z) = \sum_{j = 1}^J \tilde{w}_{\lambda, i_{x, j}}(z)\) and \(\sum_{j = 1}^J \psi_{i_{x, j}}(z) = 1\). Given \(x \in M\), we compute
    \begin{align*}
        d_{\text{Euc}}(\tilde{w}_\lambda(x), N)^2 & \leq \text{Vol}(B_\lambda(x))^{-1} \int_{B_\lambda(x)} |\tilde{w}_\lambda(x) - w(y)|^2 \, \mu_g(y) \\
        &\leq C_{M, N}\lambda^{-n} \int_{B_\lambda(x)} |\sum_{j = 1}^J \tilde{w}_{\lambda, i_{x, j}}(x) - \psi_{i_{x, j}} w(y)|^2 \, \mu_g(y) \\
        &\leq C_{M, N}\lambda^{-n} \sum_{j = 1}^J\int_{B_\lambda(x)} | \tilde{w}_{\lambda, i_{x, j}}(x) - \psi_{i_{x, j}} w(y)|^2 \, \mu_g(y).
    \end{align*} 
     It follows from the Euclidean bounds in geodesic coordinates and the Poincar{\'e} inequality that
    \begin{align*}
        &\lambda^{-n} \int_{B_\lambda(x)} | \tilde{w}_{\lambda, i_{x, j}}(x) - \psi_{i_{x, j}} w(y)|^2 \, \mu_g(y) \leq C_{M, N}\lambda^{2-n} \int_{B_\lambda(x)} |\nabla (\psi_{i_{x, j}} w)|^2 \, \mu_g \\
        &\leq C_{M, N}\lambda^{2-n} \int_{B_\lambda(x)} |\nabla \psi_{i_{x, j}}|^2|w|^2 \, \mu_g + C_{M, N} \lambda^{2-n} \int_{B_\lambda(x)} |\psi_{i_{x, j}}|^2|\nabla w|^2 \, \mu_g \\
        &\leq C_{M, N} \lambda^2 + C_{M, N}\lambda^{2-n} \int_{B_\lambda(x)} |\nabla w|^2 \, \mu_g.
    \end{align*} If \(\lambda_0 \leq \rho_0\) and \(\varepsilon\) are small enough, this last line is less than \(\varepsilon_1^2\). Hence, up to shrinking \(\lambda_0\), we have
    \begin{align*}
        d_{\text{Euc}}(\tilde{W}_\lambda(x), N)^2  < \varepsilon_1^2.
    \end{align*} Thus, \(\tilde{w}_\lambda\) takes value in \(U\). \par 
    Consequently, we may define
    \begin{align*}
        w_\lambda := \pi \circ \tilde{w}_\lambda.
    \end{align*}By standard arguments, we have as \(\lambda \searrow 0\) that \(w_\lambda \to w\) in \(W^{1, a}(M, N)\), and in \(C^0(M, N)\) if \(w\) is continuous. Moreover, if \(w_\lambda \to w\) in \(C^0\), then \(w_\lambda\) is homotopic to \(w\) for \(\lambda\) small enough, and the same is true for all \(\lambda \in (0, \lambda_0]\) by continuity. \par 
    Finally, we prove the Morrey norm estimate on \(dw_\lambda\). 
    Let \(x \in M\) and \(R \in (0, R_0]\). Since \(|d\pi(v)| \leq C_{M, N}|v|\) for all tangent vectors \(v \in TU\), we have
    \begin{align*}
        R^{2-n}\int_{B_R(x)} |d w_\lambda|^2 \, \mu_g \leq C_{M, N}R^{2-n}\int_{B_R(x)} |\nabla  \tilde{w}_\lambda|^2 \, \mu_g.
    \end{align*} We next bound the RHS. Let \(z \in B_R(x)\). As before, we may write \(\tilde{w}_\lambda(z) = \sum_{j = 1}^J \tilde{w}_{\lambda, i_{x. j}}(z)\), and we have \(\sum_{j = 1}^J \psi_{i_{x, j}}(z) = 1\). Fixing \(x\)-centered geodesic coordinates \(N_x\), we have
    \begin{align*}
        \nabla \tilde{w}_\lambda(z) &= f_{\text{I}} + \sum_{j = 1}^J f_{\text{II}, j} + f_{\text{III}, j} + f_{\text{IV}, j},
    \end{align*} where
    \begin{align*}
        f_{\text{I}} &= \int_{B_{3R_0}(0)} \eta_\lambda(y) \nabla\big(w(N_x(N_x^{-1}(z) - y))\big) \, \mu_{\text{Euc}}(y) \\
        f_{\text{II}, j} &= \psi_{i_{x, j}}(z) \int_{B_{3R_0}(0)} \eta_\lambda(y) \nabla \left(w\left(N_{p_{i_{x, j}}}\left(N_{p_{i_{x, j}}}^{-1}(z)-y\right)\right)-w(N_x(N_x^{-1}(z) - y))\right) \, \mu_{\text{Euc}}(y) \\
        f_{\text{III}, j} &= (\nabla \psi_{i_{x, j}}(z))\int_{B_{3R_0}(0)} \eta_\lambda(y) \left(w\left(N_{p_{i_{x, j}}}\left(N_{p_{i_{x, j}}}^{-1}(z)-y\right)\right) - w(z)\right)\mu_{\text{Euc}}(y) \\
        f_{\text{IV}, j} &= (\nabla \psi_{i_{x, j}}(z))\int_{B_{3R_0}(0)} \eta_\lambda(y) \left(w(z)-w(\left(N_{x}\left(N_{x}^{-1}(z)-y\right)\right)\right)\mu_{\text{Euc}}(y).
    \end{align*} Since \(\int_{B_{3R_0(0)}} \eta_\lambda(y) \mu_{\text{Euc}}(y) = 1\), Jensen's inequality and the bounded geometry of \(M, N\) imply
    \begin{align*}
        R^{2-n} \int_{B_R(x)} |f_{\text{I}}(z)|^2 \, \mu_g(z) \leq C_{M, N} \|dw\|_{2, 2}^2.
    \end{align*}Similarly,
    \begin{align*}
        &C_{M, N}^{-1}\int_{B_R(x)} |f_{\text{II}}(z)|^2 \, \mu_g(z) \\
        &\leq \int_{B_R(x)} \left|\int_{B_{3R}(0)} \eta_\lambda(y) \nabla\left(w\left(N_{p_{i_{x, j}}}\left(N_{p_{i_{x, j}}}^{-1}(z)-y\right)\right)\right) \, \mu_{\text{Euc}}(y) \right|^2 \, \mu_g(z) +\int_{B_R(x)} |f_{\text{I}}(z)|^2 \, \mu_g(z) \\
        &\leq C_{M, N} R^{n-2} \|dw\|_{2, 2}^2.
    \end{align*} Now, since
    \begin{align*}
        w(z) - w\left(N_x\left(N_x^{-1}(z)-y\right)\right) = \int_0^1 \frac{d}{dt} w\left(N_x\left(N_x^{-1}(z) - (1-t)y\right)\right) \, dt,
    \end{align*} we have 
    \begin{align*}
        R^{2-n}\int_{B_R(x)} |f_{\text{IV}, j}(z)|^2 \, \mu_g(z) \leq C_{M, N} \|dw\|_{2, 2}^2.
    \end{align*} Similarly,
    \begin{align*}
        R^{2-n}\int_{B_R(x)} |f_{\text{III}, j}(z)|^2 \, \mu_g(z) \leq C_{M, N} \|dw\|_{2, 2}^2.
    \end{align*}
    Thus, we deduce that
    \begin{align*}
        R^{2-n} \int_{B_R(x)} |\nabla \tilde{w}_\lambda|^2 \, \mu_g \leq C_{M, N} \|d w\|_{2, 2}^2,
    \end{align*} from which the desired bound follows.
\end{proof}
\begin{proof}[Proof of Corollary \ref{gapcor}]

    The ``only if'' direction is clear since the infimum is attained by a constant map.
    
    On the other hand, suppose that (\ref{Morreyinfimum}) holds. Then, there exist continuous maps \(w_i \in \gamma\) such that \(\|dw_i\|_{2, 2} \searrow 0\). Then by Lemma \ref{Morreyapproximation}, there exist smooth maps \(w_i'\) such that \(w_i'\) is homotopic to \(w_i\) and \(\|dw_i'\|_{2, 2} \searrow 0\). Hence for \(i\) large enough, Theorem \ref{thm:regularballtheorem} implies that \(w_i'\) is homotopic to a constant map. We conclude that \(\gamma\) is trivial, as desired.
\end{proof}

\subsection*{Acknowledgments}
The author thanks his advisor Alex Waldron for guidance and simplifications. The author was supported by NSF DMS-2037851 during the preparation of this article.
\subsection*{Data Availability Statement}
The author declares that data sharing is not applicable to this article as no datasets were generated or analyzed during the current study.
\section{Appendix}
\begin{proof}[Proof of Lemma \ref{lemma:gradientestimates}]
     In the sequel, \(C\) denotes a constant depending on the geometry of \(M\) that may increase in each appearance. For the first estimate, it suffices that we have for \(t \in (0, 1]\) the estimate
    \begin{align}\label{chengliyaubound}
        |\nabla_x H(x, y, t)| \leq Ct^{-\frac{1}{2}} H\left(x, y, Ct\right).
    \end{align} To see that this bound is sufficient, note that
    \begin{align*}
        \left|\nabla_x \int H(x, y, t) f_0(y) \, \mu_g(y)\right| &\leq \int \left|\nabla_x H(x, y, t)\right| \left|f_0(y)\right| \, \mu_g(y) \\
        &\leq Ct^{-\frac{1}{2}}\int H(x, y, Ct) |f_0(y)| \, \mu_g(y),
    \end{align*} so by \(L^p\) estimates for the heat kernel on \((0, C]\),
    \begin{align*}
        \|\nabla H_t(f_0)\|_r \leq Ct^{-\frac{1}{2}} \|H_{Ct}(|f_0|)\|_r \leq Ct^{-\frac{n}{2}\left(\frac{1}{q}-\frac{1}{r}\right)-\frac{1}{2}}\|f_0\|_q.
    \end{align*} But the bound (\ref{chengliyaubound}) follows from \cite[Theorem 6]{chengliyauheatkernel} and (\ref{heatkernel:lowerbound}), so the first estimate of Lemma \ref{lemma:gradientestimates} holds.
   \par 
   For the second estimate of Lemma (\ref{lemma:gradientestimates}), we do the following. First we prove the estimate with \(q = r\), and then we prove it with \(r = \infty\) and \(q = 1\). Then, the other cases follow by interpolating as usual. We start with the \(q = r\) case. \par Recall that if \(f\) solves the heat equation, then 
    \begin{align}\label{gradientestimates:oneformheatequation}
        (\partial_t - \Delta_1)\nabla f = \text{Ric} \# \nabla f,
    \end{align} where \(\Delta_1\) is the connection Laplacian on one-forms. Since \(M\) is compact, we let \(H_t^1\) denote the fundamental solution of the heat equation on one-forms. Then by (\ref{gradientestimates:oneformheatequation}) and Duhamel's principle,
   \begin{align*}
       \nabla H_t(f_0) = H_t^1(\nabla f_0) + \int_0^t H_{t-s}^1 \left(\text{Ric} \# \nabla H_s( f_0)\right) \, ds.
   \end{align*} Since for all \(r\) and \(t \in (0, 1]\)
   \begin{align*}
       \left\|H_t^1\right\|_{L^r \to L^r} \leq C,
   \end{align*} we have
   \begin{align*}
       \left\|\nabla H_t(f_0)\right\|_r &\leq \|H_t^1(\nabla f_0)\|_r + \int_0^t \left\|H_{t-s}^1\left(\text{Ric} \# \nabla H_s( f_0)\right)\right\|_r \, ds \\
       &\leq C\|\nabla f_0\|_r + C\int_0^t \left\|\text{Ric} \# \nabla H_s( f_0)\right\|_r \, ds \\
       &\leq C\|\nabla f_0\|_r + C\int_0^t \left\|\nabla H_s(f_0)\right\|_r \, ds.
   \end{align*} Hence, if \(t\) is sufficiently small depending only on \(C\),
   \begin{align*}
       \sup_{0 < s \leq t} \|\nabla H_s(f_0)\|_r \leq C\|\nabla f_0\|_r + \frac{1}{2}\sup_{0 < s \leq t} \|\nabla H_s(f_0)\|_r,
   \end{align*} whence
   \begin{align}\label{gradientestimates:LrLrbound}
       \|\nabla H_t(f_0)\|_r \leq C\|\nabla f_0\|_r.
   \end{align} By the semigroup property of \(H_t\), we obtain this estimate with a larger \(C\) for all \(t \in (0, 1]\). This concludes the \(q = r\) case of the second estimate of Lemma \ref{lemma:gradientestimates}. \par 
   Next we do the \(r = \infty\) and \(q = 1\) case of the second estimate of Lemma \ref{lemma:gradientestimates}. By (\ref{gradientestimates:oneformheatequation}) and Kato's inequality, we have (weakly) 
    \begin{align}\label{gradientestimates:oneformheatinequality}
        (\partial_t - \Delta)|\nabla f| \leq C|\nabla f|.
    \end{align} Thus by parabolic Moser iteration, we have for \(t \in (0, 1]\)
   \begin{align*}
       \|\nabla H_t(f_0)\|_\infty \leq Ct^{-\frac{n+2}{2}}\int_0^t \|\nabla H_s(f_0)\|_1 \, ds.
   \end{align*} Then by (\ref{gradientestimates:LrLrbound}) with \(r = 1\)
   \begin{align*}
       \|\nabla H_t(f_0)\|_\infty \leq Ct^{-\frac{n}{2}}\|\nabla f_0\|_1,
   \end{align*} which is the \(r = \infty\) and \(q = 1\) case. \par 
   The other cases follow by interpolating the \(L^r \to L^r\) bound and the \(L^1 \to L^\infty\) bound.
\end{proof}

\bibliographystyle{plain}
\bibliography{biblio}
\end{document}